\documentclass{amsart}

\usepackage{color,amsmath,amssymb,setspace,savesym,hyperref}
\usepackage[all]{xy}
\savesymbol{cir}
\usepackage[shortalphabetic]{amsrefs}

\renewcommand{\baselinestretch}{1.3}
\renewcommand\MR[1]{%
    \relax\ifhmode\unskip\spacefactor3000 \space\fi
    MR\nolinebreak[3]\hspace{.16667em}#1%
}

\theoremstyle{plain}
\newtheorem{theorem}{Theorem}[section]
\newtheorem*{theorem*}{Theorem}
\newtheorem{lemma}[theorem]{Lemma}
\newtheorem{proposition}[theorem]{Proposition}
\newtheorem{corollary}[theorem]{Corollary}
\newtheorem*{corollary*}{Corollary}

\theoremstyle{remark}
\newtheorem{remark}[theorem]{Remark}

\theoremstyle{definition}
\newtheorem{definition}[theorem]{Definition}
\newtheorem{example}[theorem]{Example}

\newcommand{\Z}{\mathbb{Z}}
\newcommand{\R}{\mathbb{R}}
\newcommand{\dis}{\displaystyle}
\newcommand{\al}{\alpha}
\newcommand{\be}{\beta}
\newcommand{\ga}{\gamma}

\newcommand{\ep}{\epsilon}
\newcommand{\la}{\lambda}

\newcommand{\G}{\Gamma}

\newcommand{\I}{^{-1}}
\newcommand{\gen}{\langle}
\newcommand{\by}{\rangle}
\DeclareMathOperator{\SL}{SL}

\DeclareMathOperator{\Sp}{Sp}
\DeclareMathOperator{\PSp}{PSp}

\DeclareMathOperator{\SO}{SO}

\DeclareMathOperator{\Spin}{Spin}

\numberwithin{equation}{section}

\begin{document}

\title{Representatives of elliptic Weyl group elements in algebraic groups}

\author{Matthew C. B. Zaremsky}
\address{Department of Mathematics \\
Bielefeld University \\
Bielefeld, Germany 33615}
\email{zaremsky@math.uni-bielefeld.de}

\begin{abstract}
\singlespacing
An element $w$ of a Weyl group $W$ is called \emph{elliptic} if it has no
eigenvalue 1 in the standard reflection representation. We determine the order
of any representative $g$ in a semisimple algebraic group $G$ of an elliptic
element $w$ in the corresponding Weyl group $W$. In particular if $w$ has order
$d$ and $G$ is simple of type different from $C_n$ or $F_4$, then $g$ has order
$d$ in $G$.
\end{abstract}

\maketitle

\section{Introduction}
\label{sec:intro}

An element $w$ of a Weyl group $W$ is called \emph{elliptic} if it has no
eigenvalue 1 in the standard reflection representation. It is well known that
the Coxeter elements provide examples of such elements, but in general they are
not the only examples \citelist{\cite{fedotov09}*{Proposition~8}
\cite{humphreys92}*{Lemma~3.16}}. If we think of $W$ not as the Weyl group of a
root system but as the quotient $W=N_G(T)/T$ in some semisimple algebraic group
$G$ with maximal torus $T$, the natural question arises whether representatives
in $G$ of elliptic elements have any nice properties. In this paper we determine
the order of any representative in $G$ of an elliptic element.

The classification of conjugacy classes in Weyl groups is provided in
\cite{carter72}, where they are essentially classified by certain
\emph{admissible diagrams}, which we will call \emph{Carter diagrams}. These
diagrams are particularly useful in the present context since they make it easy
to single out elliptic elements. The question of determining the order of
representatives of elliptic elements was analyzed in \cite{fedotov09}, with some
substantial results in certain cases. The cases of $E_6$ and $E_7$, however,
proved particularly troublesome in that paper, and in the classical cases the
focus was on the case when $G$ is simple. In the present work, instead of
analyzing the problem thinking of $G$ as a matrix group, we use Chevalley
generators and relations to calculate the order of any representative of an
elliptic element. One surprising result is that if $G$ is simple and $w$ is
elliptic with order $d$, then representatives $g$ of $w$ almost always have
order $d$, with the only counterexamples arising in $C_n$ and $F_4$. A summary
of results is given in Table~\ref{tab:final_chart}; see Definition~\ref{spin}
for an explanation of the terminology in the table.

Weyl group elements with no eigenvalue 1 are called \emph{elliptic} in
\cite{lusztig10} and \emph{generalized Coxeter elements} in \cite{dwyer99}. Here
we will generally stick with ``elliptic." If $w$ is elliptic we will also refer
to the conjugacy class of $w$ in $W$ as ``elliptic" since eigenvalues are
conjugation invariant. Our main references for facts about root systems and
semisimple groups are \cite{carter05} and \cite{gls98}. We will use the
numbering of the nodes of the Dynkin diagrams given in \cite{carter05}. (Note
that the numbering for $E_7$ and $E_8$ is different than that given in
\cite{gls98}.)

$$A_{n-1}~~~(n>1)\hspace{.3in}\xy
(2,2.5)*{1}; (11,2.5)*{2}; (29,2.5)*{n-2}; (39,2.5)*{n-1};
(2,0)*{\circ}; (11,0)*{\circ} **\dir{-}; (20,0)*{~\cdots~} **\dir{-};
(29,0)*{\circ} **\dir{-}; (39,0)*{\circ} **\dir{-};
\endxy$$

$$B_n~~~(n>1)\hspace{.3in}\xy
(2,2.5)*{1}; (11,2.5)*{2}; (29,2.5)*{n-1}; (38,2.5)*{n};
(2,0)*{\circ}; (11,0)*{\circ} **\dir{-}; (20,0)*{~\cdots~} **\dir{-};
(29,0)*{\circ} **\dir{-}; {\ar@2{->}(29,0)*{\circ};(38,0)*{\circ}};
\endxy$$

$$C_n~~~(n>1)\hspace{.3in}\xy
(2,2.5)*{1}; (11,2.5)*{2}; (29,2.5)*{n-1}; (38,2.5)*{n};
(2,0)*{\circ}; (11,0)*{\circ} **\dir{-}; (20,0)*{~\cdots~} **\dir{-};
(29,0)*{\circ} **\dir{-}; {\ar@2{<-}(29,0)*{\circ};(38,0)*{\circ}};
\endxy$$

$$D_n~~~(n>3)\hspace{.3in}\xy
(2,2.5)*{1}; (11,2.5)*{2}; (28,2.7)*{n-2}; (38,-5.5)*{n}; (38,5.5)*{n-1};
(2,0)*{\circ}; (11,0)*{\circ} **\dir{-}; (20,0)*{~\cdots~} **\dir{-};
(29,0)*{\circ} **\dir{-}; (38,-3)*{\circ} **\dir{-}; (29,0)*{\circ};
(38,3)*{\circ} **\dir{-};
\endxy$$

$$E_6\hspace{.3in}\xy
(2,2.5)*{1}; (11,2.5)*{2}; (20,2.5)*{3}; (29,2.5)*{5}; (38,2.5)*{6};
(22,-6)*{4};
(2,0)*{\circ}; (11,0)*{\circ} **\dir{-}; (20,0)*{\circ} **\dir{-};
(29,0)*{\circ} **\dir{-}; (38,0)*{\circ} **\dir{-}; (20,0)*{\circ};
(20,-6)*{\circ} **\dir{-};
\endxy$$

$$E_7\hspace{.3in}\xy
(2,2.5)*{1}; (11,2.5)*{2}; (20,2.5)*{3}; (29,2.5)*{4}; (38,2.5)*{6};
(47,2.5)*{7}; (31,-6)*{5};
(2,0)*{\circ}; (11,0)*{\circ} **\dir{-}; (20,0)*{\circ} **\dir{-};
(29,0)*{\circ} **\dir{-}; (38,0)*{\circ} **\dir{-}; (47,0)*{\circ} **\dir{-};
(29,0)*{\circ}; (29,-6)*{\circ} **\dir{-};
\endxy$$

$$E_8\hspace{.3in}\xy
(2,2.5)*{1}; (11,2.5)*{2}; (20,2.5)*{3}; (29,2.5)*{4}; (38,2.5)*{5};
(47,2.5)*{7}; (56,2.5)*{8};  (40,-6)*{6};
(2,0)*{\circ}; (11,0)*{\circ} **\dir{-}; (20,0)*{\circ} **\dir{-};
(29,0)*{\circ} **\dir{-}; (38,0)*{\circ} **\dir{-}; (47,0)*{\circ} **\dir{-};
(56,0)*{\circ} **\dir{-}; (38,0)*{\circ}; (38,-6)*{\circ} **\dir{-};
\endxy$$

$$F_4\hspace{.3in}\xy
(2,2.5)*{1}; (11,2.5)*{2}; (20,2.5)*{3}; (29,2.5)*{4};
(2,0)*{\circ}; (11,0)*{\circ} **\dir{-};
{\ar@2{->}(11,0)*{\circ};(20,0)*{\circ}}; (20,0)*{\circ} ; (29,0)*{\circ}
**\dir{-};
\endxy$$

$$G_2\hspace{.3in}\xy
(2,2.5)*{1}; (11,2.5)*{2};
{\ar@3{->}(2,0)*{\circ};(11,0)*{\circ}}
\endxy$$

\section{Preliminary results}\label{sec:general}
Let $\Phi$ be a reduced crystallographic root system with Weyl group $W$, $K$ an
algebraically closed field, and $G$ a semisimple algebraic $K$-group with root
system $\Phi$. Let $G_u$ be the corresponding universal group and $G_a$ the
adjoint group, as in \cite{gls98}*{Theorem~1.10.4}. Then we have epimorphisms
$G_u\rightarrow G\rightarrow G_a$ with $\dis\ker\left(G_u\rightarrow
G_a\right)=Z(G_u)$ finite. In fact, $G$ is always either $G_a$ or $G_u$
unless $\Phi$ has type $A_n$ or $D_n$. Thus it is almost sufficient to just analyze $G_a$ and $G_u$.

We will need to think of $G$ in terms of Chevalley generators and relations, so
we now establish some facts in that vein. Let $T$ be a maximal torus in $G$ and
let $x_{\al}(\la)$ denote the standard Chevalley generators, where $\al\in\Phi$
and $\la\in K$. For each $\al\in\Phi$, $\la\in K^*$ define
$m_{\al}(\la):=x_{\al}(\la)x_{-\al}(-\la\I)x_{\al}(\la)$ and
$h_{\al}(\la):=m_{\al}(\la)m_{\al}(-1)$. Let $N:=\gen m_{\al}(\la)\by$, and note
that $T=\gen h_{\al}(\la)\by$ \cite{gls98}*{Theorem~1.12.1}. It is a fact that
$N/T\cong W$; see \cite{steinberg67}*{Lemma~22}. The following Chevalley
relation, which we will need later, is established in the proof of
\cite{carter72book}*{Lemma~7.2.2}.

\textbf{(CR1):} For $\al,\be\in\Phi$,
$m_{\al}(1)m_{\be}(1)m_{\al}(1)\I=m_{s_{\al}\be}(c(\al,\be))$ where
$c(\al,\be)=\pm1$ is determined only by $\al$ and $\be$.

This sign $c(\al,\be)$ can sometimes be computed just from knowing the
$\al$-chain of roots through $\be$. As we will see in
Lemma~\ref{orth_m_commute}, if $\al$ and $\be$ are orthogonal then $c(\al,\be)$
is ``usually" 1, and by orthogonality $s_{\al}\be=\be$ so then $m_{\al}(1)$ and
$m_{\be}(1)$ actually commute. Details are given in Lemma~\ref{orth_m_commute}.
An immediate corollary to (CR1) is the following, which does not depend on
$c(\al,\be)$:

\textbf{(CR2):} For $\al,\be\in\Phi$,
$m_{\al}(1)h_{\be}(-1)m_{\al}(1)\I=h_{s_{\al}\be}(-1)$.

We now define $N_0$ to be $\gen m_{\al}(1)\mid\al\in\Phi\by$ and $T_0$ to be
$\gen h_{\al}(-1)\mid\al\in\Phi\by$. It is easy to see that $N_0/T_0\cong W$, by
the same proof that $N/T\cong W$. See also \cite{abr11}*{Lemma~4.2} and
\cite{gls98}*{Remark~1.12.11}. Since $T_0$ is abelian and all its elements
square to 1, we immediately see that any Weyl group element $w$ of order $d$ has
at least one representative $g_0$ of order either $d$ or $2d$.

For elliptic $w$, by \cite{fedotov09}*{Theorem~1} and, independently,
\cite{abr11}*{Theorem~4.3}, all representatives of $w$ in $N$ have the same
order. In fact by the proof of \cite{abr11}*{Theorem~4.3}, for any
representative $g$, $g^d=g_0^d$. Thus to determine the order of any
representative $g$ of an elliptic Weyl group element $w$ with order $d$, it
suffices to check whether $g_0^d=1$ or not, for $g_0\in N_0$ representing $w$.
We encode this fact into the following proposition, which is proved in the
sources mentioned above.

\begin{proposition}\label{reps_have_same_order}
Let $w\in W$ be an elliptic element with order $d$. Then all representatives of
$w$ in $N$ have the same order. In particular they all have order $d$ if
$g_0^d=1$ and order $2d$ otherwise.
\end{proposition}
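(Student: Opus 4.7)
The plan is to reduce everything to the single identity $g^d = g_0^d$, valid for every representative $g \in N$ of $w$. Since $N/T \cong W$, any such $g$ can be written $g = tg_0$ with $t \in T$; using that the conjugation action of $g_0$ on the abelian group $T$ descends to the Weyl group action of $w$ (independent of the chosen lift since $T$ is abelian), we have
\begin{equation*}
g^d = (tg_0)^d = t \cdot w(t) \cdot w^2(t) \cdots w^{d-1}(t) \cdot g_0^d = N_w(t) \cdot g_0^d,
\end{equation*}
where $N_w := 1 + w + w^2 + \cdots + w^{d-1}$ is the norm endomorphism of $T$. It therefore suffices to prove $N_w = 0$ on $T$.

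To this end, pass to the cocharacter lattice $X_*(T)$. Because $w^d = 1$, we have $(1-w)\, N_w = 1 - w^d = 0$ as endomorphisms of $X_*(T)$. The ellipticity of $w$ says exactly that $1 - w$ has trivial kernel on $X_*(T) \otimes \Q$, and since $X_*(T)$ is torsion-free, $1 - w$ is injective on $X_*(T)$ itself. Hence $N_w = 0$ on $X_*(T)$. Since $K$ is algebraically closed, the standard identification $T \cong X_*(T) \otimes_{\Z} K^*$ transfers this to $N_w = 0$ on $T$, giving $g^d = g_0^d$ for every representative $g$ of $w$.

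The remaining order computation is short. If $g_0^d = 1$ then $g^d = 1$ for every $g$, and since $g$ projects to $w$ of order $d$, its order is exactly $d$. If $g_0^d \neq 1$, then $g_0^d$ is a nontrivial element of $T_0$ and hence has order $2$ (as already noted in the excerpt), so $g^{2d} = (g_0^d)^2 = 1$ for every $g$. The order $n$ of $g$ therefore divides $2d$, and since $d \mid n$ (again because $g$ projects to $w$), we have $n \in \{d, 2d\}$; as $g^d = g_0^d \neq 1$, the only possibility is $n = 2d$.

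The main obstacle is transferring the vanishing of $N_w$ from $X_*(T) \otimes \Q$ down to $T$ itself. Both the ellipticity hypothesis (needed for injectivity of $1 - w$ on the lattice) and the algebraic closure of $K$ (needed to identify endomorphisms of $T$ with endomorphisms of $X_*(T)$) enter crucially at this step. Everything else is direct bookkeeping in the Chevalley setup.
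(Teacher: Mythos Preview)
Your proof is correct. The paper does not actually prove this proposition in the text; in the paragraph preceding the statement it cites \cite{fedotov09}*{Theorem~1} and \cite{abr11}*{Theorem~4.3}, noting that the proof of the latter yields the key identity $g^d = g_0^d$ for every representative $g$, and then simply records the proposition as established. Your argument supplies a clean self-contained proof of precisely this identity: writing $g = t g_0$ and expanding gives $g^d = N_w(t)\, g_0^d$, and the factorization $(1-w)N_w = 1 - w^d = 0$ on $X_*(T)$ together with injectivity of $1-w$ (from ellipticity, since $X_*(T)\otimes\Q$ carries the reflection representation) forces $N_w = 0$ on the lattice and hence on $T \cong X_*(T)\otimes_{\Z} K^*$. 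The final order deduction from $g^d = g_0^d \in T_0$ is exactly the observation the paper makes just before the proposition. So you have filled in what the paper leaves to its references, presumably by the same route those references take.
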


\begin{remark}\label{torsion_converse}
The converse of Proposition~\ref{reps_have_same_order} is also true for most
$K$; that is, if $w$ is not elliptic and if $K$ contains an element of infinite
order, then $w$ has a representative of infinite order in $N$. This is proved in
Theorem~4.3 in \cite{abr11}, but we will not need this fact here.
\end{remark}

The elliptic elements are classified in \cite{carter72}, and to each conjugacy
class of elliptic elements is assigned an ``admissible diagram" $\G$, which we
call a \emph{Carter diagram}. For $w\in W=W(\Phi)$, we can always find linearly
independent roots $\be_1,\dots,\be_r$ such that $w=s_{\be_1}\cdots s_{\be_r}$,
and $w$ is elliptic if and only if $r=n$ where $n$ is the rank of $\Phi$
\cite{carter72}. In general $\G$ is constructed by taking a node for each
$\be_i$ and connecting $\be_i$ to $\be_j$ with a certain number of edges given
by the same rule as for Dynkin diagrams (that is, depending on the angle between
$\be_i$ and $\be_j$). In particular if $\al_1,\dots,\al_n$ are the simple roots
then $w=s_{\al_1}\cdots s_{\al_n}$ is a Coxeter element and simply has Carter
diagram equal to the Dynkin diagram of $\Phi$. Another important case is when
$\Phi$ contains $n$ mutually orthogonal roots $\be_1,\dots,\be_n$. In this case
$w=s_{\be_1}\cdots s_{\be_n}$ is the negative identity element $-I$ in $W$, with
Carter diagram $A_1^n$, i.e., $n$ unconnected nodes. It is possible that two
elements in $W$ can have the same Carter diagram without being conjugate, but this will never happen for \emph{elliptic} elements \cite{carter72}.

At this point for the sake of brevity we introduce the following definitions:

\begin{definition}\label{spin}
Let $w\in W$ be elliptic with order $d$. If all representatives of $w$ in $G$
have order $d$ we say $w$ has \emph{spin 1}. If all representatives of $w$ have
order $2d$ we say $w$ has \emph{spin $-1$}. Note that this is a property of $w$
and of $G$, not just of $w$. Thus we will often need to refer to
\emph{$G$-spin}, \emph{adjoint spin}, or \emph{universal spin}. Spin is of
course preserved by conjugation, so we may also refer to the spin of a conjugacy
class or Carter diagram. Furthermore, if $w\in W$ is elliptic with order $d$ and
$g_0\in N_0$ represents $w$, we will call $g_0^d\in T_0$ the \emph{spin
signature} of $w$. This doesn't depend on the choice of $g_0$, and so is well
defined.
\end{definition}
Unlike spin, the spin signature may not be conjugation invariant. In practice we
will often find that the spin signature of $w$ is central in $G$, in which case
we can refer to the spin signature of a conjugacy class or Carter diagram. In
Section~\ref{sec:cox_elts} we will present a labeling of the Carter diagram of
$w$ that helps to calculate the spin signature. First we establish a few results
that simplify things considerably.

\begin{corollary}\label{odd_order}
Let $w\in W$ be elliptic with odd order $d$. Then $w$ has spin 1.
\end{corollary}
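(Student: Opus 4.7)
The plan is to use Proposition~\ref{reps_have_same_order}, which reduces the claim to exhibiting a single representative $g_0\in N_0$ of $w$ with $g_0^d=1$. Fix any such $g_0$. Since $w^d=1$, the element $g_0^d$ lies in $T_0$, and since every element of $T_0$ squares to $1$, we immediately get $g_0^{2d}=1$. Thus the order of $g_0$ divides $2d$ and is divisible by $d$, so it equals either $d$ or $2d$; the only thing to rule out is the latter.

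The key trick I would use is to modify $g_0$ by the element $g_0^d\in T_0\subseteq T$: the element
$$g_0':=g_0\cdot g_0^d=g_0^{d+1}$$
is again a representative of $w$ in $N$, since it differs from $g_0$ by an element of $T$. Now exploit the parity of $d$: because $d$ is odd, $d+1$ is even, so $d(d+1)$ is a multiple of $2d$, and therefore
$$(g_0')^d=g_0^{d(d+1)}=\bigl(g_0^{2d}\bigr)^{(d+1)/2}=1.$$
Hence the representative $g_0'$ has order dividing $d$.

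To finish, I would invoke Proposition~\ref{reps_have_same_order} once more: all representatives of $w$ in $N$ share a common order, so since $g_0'$ has order dividing $d$, so must $g_0$. This forces $g_0^d=1$, and hence $w$ has spin $1$.

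I do not foresee a real obstacle here; the whole argument is the parity substitution $g_0\leadsto g_0^{d+1}$, which is legitimate precisely because $g_0^d$ lies in the torus $T$. Proposition~\ref{reps_have_same_order} does the heavy lifting both at the start (reducing to a single representative) and at the end (transporting the order computation from $g_0'$ back to $g_0$), so one only needs to produce one representative annihilated by $d$, and when $d$ is odd the $(d+1)$-st power of any representative is such an element.
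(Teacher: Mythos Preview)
Your proof is correct and follows essentially the same approach as the paper: both arguments replace $g_0$ by $g_0' = g_0\cdot g_0^d = g_0^{d+1}$ and exploit the parity of $d$ together with Proposition~\ref{reps_have_same_order}. The only cosmetic difference is that the paper phrases the computation as $(g_0g_0^d)^d=g_0^d$ (using the fact that all representatives share the same $d$-th power) to deduce $g_0^{d^2}=1$, whereas you compute $(g_0')^d=1$ directly from $g_0^{2d}=1$; these are equivalent rearrangements of the same parity argument.
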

\begin{proof}
Let $g_0\in N_0$ represent $w$, so $g_0^d\in T_0$. Since $(g_0g_0^d)^d=g_0^d$,
in fact $g_0^{d^2}=1$. But since $d$ is odd this means that $g_0$ cannot have
order $2d$, and so has order $d$.
\end{proof}

\begin{lemma}\label{powers_spin}
For any $w\in W$ and $r\in\Z$, if $w$ and $w^r$ are both elliptic then $w$ has
the same spin and spin signature as $w^r$.
\end{lemma}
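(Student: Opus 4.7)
The plan is to compute both spin signatures using powers of a single $N_0$-representative of $w$. I would fix $g_0 \in N_0$ representing $w$; since $N_0 \le N$ and the projection $N \to W$ is a homomorphism, $g_0^r \in N_0$ represents $w^r$. Set $d = \operatorname{ord}(w)$ and $d' = \operatorname{ord}(w^r) = d/e$ with $e = \gcd(d,r)$, so that $r/e \in \Z$. By well-definedness, the spin signature of $w$ is $g_0^d$ and that of $w^r$ is $(g_0^r)^{d'} = g_0^{rd'} = (g_0^d)^{r/e}$; the goal is to show these two elements of $T_0$ coincide.

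The key step is to compare $g_0^r$ with a second $N$-representative of $w^r$, namely $g_0^{r+d}$ (which lies in $N_0 \subseteq N$ and projects to $w^{r+d} = w^r$). Applying to $w^r$ the strong form of Proposition~\ref{reps_have_same_order} noted just before it, which says that any two $N$-representatives of a given elliptic element have equal $\operatorname{ord}$-th power, yields $(g_0^{r+d})^{d'} = (g_0^r)^{d'}$, and since $(g_0^{r+d})^{d'} = g_0^{rd'} \cdot g_0^{dd'}$ one concludes $g_0^{dd'} = 1$. Setting $t := g_0^d \in T_0$ and recalling that $T_0$ has exponent $2$, the identity $t^{d'} = 1$ forces either $t = 1$ or $t$ is a nontrivial involution and $d'$ is even.

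In the first (spin $1$) case, $(g_0^d)^{r/e} = 1$, so both spin signatures are trivial. In the second (spin $-1$) case, $d'$ even gives $v_2(d) > v_2(e) = \min(v_2(d), v_2(r))$, i.e., $v_2(r) < v_2(d)$; this forces $v_2(r/e) = 0$, so $r/e$ is odd and $(g_0^d)^{r/e} = t = g_0^d$. In both scenarios the spin signatures match, and since the spin of an elliptic element is determined by whether its spin signature vanishes in $G$, the spins match as well.

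The only nonroutine observation is the trick of comparing the two shifted representatives $g_0^r$ and $g_0^{r+d}$ of $w^r$ to extract $g_0^{dd'} = 1$; without this, the spin $-1$ case would seem to demand a case-by-case analysis (e.g.\ to rule out $v_2(r) > v_2(d)$ by hand). With this identity in hand, the remaining $2$-adic bookkeeping in the exponent-$2$ group $T_0$ is elementary and I do not expect it to pose a serious obstacle.
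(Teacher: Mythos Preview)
Your proof is correct and, while it shares with the paper the starting observation that $g_0^r\in N_0$ represents $w^r$, it proceeds along a genuinely different route. The paper argues via orders: if $w$ has spin $1$ then any $g\in N$ representing $w$ has order $d$, hence $g^r$ has order $d/\gcd(d,r)=d'$, so $w^r$ has spin $1$; the spin $-1$ case is then dispatched as a ``parallel argument'' and the spin-signature claim as ``immediate.'' That parallel is not quite automatic, however: when $g$ has order $2d$ one only gets that $g^r$ has order $2d/\gcd(2d,r)$, which equals $2d'$ precisely when $v_2(r)\le v_2(d)$, and this inequality needs justification. Your comparison of the two $N_0$-representatives $g_0^r$ and $g_0^{r+d}$ of $w^r$, invoking the strong form of Proposition~\ref{reps_have_same_order} to extract $g_0^{dd'}=1$ (the same device that drives Corollary~\ref{odd_order}), is exactly what supplies this: it forces $d'$ even in the spin $-1$ case and hence $r/e$ odd, after which the $2$-adic bookkeeping in $T_0$ finishes the job. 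So your approach trades the paper's brevity for a fully explicit treatment of the spin $-1$ case and a direct computation of the spin signature, whereas the paper's order-based argument is shorter but leaves that verification to the reader.
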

\begin{proof}
Say $w$ has order $d$ and spin 1. Then any representative $g$ of $w$ in $N$ has
order $d$, so $g^r$ has the same order as $w^r$ implying that $w^r$ has spin 1.
The spin $-1$ case follows by a parallel argument, and the fact that the spin
signatures are the same is immediate.
\end{proof}

\begin{definition}
Let $w_1$ and $w_2$ be elements of $W$. If there exists $r\in\Z$ such that
$w_1^r=w_2$ or $w_2^r=w_1$ then we will call $w_1$ and $w_2$ \emph{linked}.
Similarly we may refer to the corresponding conjugacy classes as \emph{linked}
if there exist representatives from each class that are linked. The point is
that linked classes have equal spins, and linked elements have equal spins and
spin signatures.
\end{definition}

To tell whether two elliptic classes are linked we will often make use of
Table~3 in \cite{carter72}, which lists the characteristic polynomials of
elliptic elements. Knowing the eigenvalues of an elliptic element $w$ allows us
to easily check which powers $w^r$ are elliptic, and to identify the conjugacy
class of $w^r$. For example if the eigenvalues of $w$ are all primitive
$2r_{th}$ roots of unity, then $w^r=-I$ and $w$ is linked to $-I$.

\begin{lemma}\label{lands_in_center}
Suppose $-I\in W$. Then any representative $g$ of $-I$ in $N$ satisfies $g^2\in
Z(G)$. In particular if $G$ is simple then $-I$ has spin 1.
\end{lemma}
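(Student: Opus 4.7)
The strategy is to pass to the canonical representative $g_0 \in N_0$, exploit that $-I$ is central in $W$ to force $g_0^2$ to be $W$-invariant in $T$, and then identify such $W$-invariants with the center, working in the universal group to do so.

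By the discussion preceding Proposition~\ref{reps_have_same_order} (specifically, the fact that $g^d = g_0^d$ for every representative $g$ of an elliptic element), we have $g^2 = g_0^2$ for a fixed $g_0 \in N_0$ representing $-I$; so it suffices to show $g_0^2 \in Z(G)$. Since $(-I)^2 = 1$ in $W$, $g_0^2$ lies in $T$. The key point is that $-I$ is central in $W$: for any $n \in N$ with image $w \in W$, the conjugate $n g_0 n^{-1}$ is another representative of $w(-I)w^{-1} = -I$, so $(n g_0 n^{-1})^2 = g_0^2$ by the same equality of $d$th powers, giving $n g_0^2 n^{-1} = g_0^2$. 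Hence $g_0^2 \in T^W$.

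To upgrade ``$W$-invariant'' to ``central'' I pass to the universal group. Let $\tilde g_0 \in N_u$ be obtained by taking the identical product of $m_\al(1)$'s in $G_u$ rather than in $G$; the same argument yields $\tilde g_0^2 \in T_u^W$. In $G_u$ the character lattice of $T_u$ is the full weight lattice $P$, and the identity $\omega_i - s_i \omega_i = \al_i$ for each fundamental weight $\omega_i$ forces $(W-1)P = Q$. Thus every root pairs trivially with $\tilde g_0^2$, so $\tilde g_0^2 \in Z(G_u)$; its image $g_0^2$ under the epimorphism $G_u \to G$ therefore lies in $Z(G)$.

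For the ``in particular'' clause, when $G$ is simple (so that $Z(G)$ is trivial) we have $g^2 \in Z(G) = 1$, so $g$ has order $d = 2$, making the spin of $-I$ equal to $1$. The one delicate step is the detour through $G_u$: in non-simply-connected groups the inclusion $T^W \subseteq Z(G)$ can fail (for instance in $\PSL_2$, where $T^W$ strictly contains $Z(G) = 1$), so the lattice identity $(W-1)P = Q$ must genuinely be applied at the universal level before projecting back to $G$.
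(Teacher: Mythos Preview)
Your proof is correct, but it takes a genuinely different route from the paper's. The paper argues by direct computation with Chevalley generators: it uses the conjugation formula $g\,x_{\al}(\la)\,g^{-1}=x_{-\al}(\ep_{\al}\la)$ with $\ep_{\al}=\pm1$, iterates once to get $g^2\,x_{\al}(\la)\,g^{-2}=x_{\al}(\ep_{\al}\ep_{-\al}\la)$, and then invokes the identity $\ep_{-\al}=\ep_{\al}$ (from \cite{carter72book}*{Proposition~6.4.3} and \cite{steinberg67}*{Lemma~19(a)}) to conclude that $g^2$ centralizes every $x_{\al}(\la)$ and hence all of $G$. This is entirely local and works directly in any $G$ without ever leaving it.

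Your argument is more structural: you exploit the centrality of $-I$ in $W$ together with the equality $g^2=g_0^2$ of Proposition~\ref{reps_have_same_order} to place $g_0^2$ in $T^W$, and then use the lattice identity $(W-1)P=Q$ in the universal group to identify $T_u^W$ with $Z(G_u)$, projecting back down at the end. The trade-offs are clear. The paper's computation is shorter and avoids any detour, but relies on a somewhat opaque sign identity. Your approach explains \emph{why} the result holds---it is exactly the centrality of $-I$ in $W$ that is doing the work---and your remark that the passage through $G_u$ is genuinely necessary (since $T^W\supsetneq Z(G)$ in $\PSL_2$, for instance) is a nice observation that the paper's method sidesteps entirely.
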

\begin{proof}
Since $g^2=g_0^2$ for $g_0\in N_0$ representing $-I$, without loss of generality
$g\in N_0$. By \cite{carter72book}*{Lemma~7.2.1(i)}, we thus have
$gx_{\al}(\la)g\I=x_{-\al}(\ep_{\al}\la)$, where $\ep_{\al}=\pm1$ depends on $g$
and $\al$ but not on $\la$ or $G$. Similarly
$g^2x_{\al}(\la)g^{-2}=x_{\al}(\ep_{\al}\ep_{-\al}\la)$. By
\citelist{\cite{carter72book}*{Proposition~6.4.3}
\cite{steinberg67}*{Lemma~19(a)}} however, $\ep_{-\al}=\ep_{\al}$, and so
actually $g^2x_{\al}(\la)g^{-2}=x_{\al}(\la)$. Since the $x_{\al}(\la)$ generate
$G$, as explained in \cite{gls98}*{Remark~1.12.3}, indeed $g^2\in Z(G)$.
\end{proof}

\begin{corollary}\label{simple_-I}
If $G$ is simple then any elliptic element $w$ of $W$ that is linked to $-I\in
W$ has spin 1.\qed
\end{corollary}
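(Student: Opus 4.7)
The plan is to derive the corollary directly from Lemmas~\ref{lands_in_center} and~\ref{powers_spin}; no calculation is required beyond unpacking definitions. By hypothesis $w$ is linked to $-I$, which means there exists $r\in\Z$ with either $w^r=-I$ or $(-I)^r=w$. In the second case, since $-I$ has order $2$ we have $(-I)^r\in\{I,-I\}$, and since $I$ is not elliptic this forces $w=-I$. Thus in every case some integer power of $w$ equals $-I$ (possibly just $w$ itself).

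First I would invoke Lemma~\ref{lands_in_center}, which gives that $-I$ has spin $1$ whenever $G$ is simple. Then, because both $w$ and $w^r=-I$ are elliptic, Lemma~\ref{powers_spin} applies and tells us that $w$ has the same spin as $-I$. Combining the two conclusions yields that $w$ has spin $1$, as required.

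There is no genuine obstacle here; the statement is essentially a repackaging of the two preceding lemmas, which is consistent with the $\qed$ already appearing in the corollary's statement. The only mild subtlety is handling the two-sided definition of ``linked,'' but the observation that $(-I)^r$ is always $\pm I$ reduces the second direction to the trivial case $w=-I$.
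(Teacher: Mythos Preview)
Your proposal is correct and matches the paper's intended reasoning: the corollary is left unproved (marked with \qed) precisely because it follows immediately by combining Lemma~\ref{lands_in_center} with Lemma~\ref{powers_spin}, exactly as you do. Your handling of the two-sided ``linked'' definition is a nice detail that the paper leaves implicit.
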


We can calculate the spin of many elliptic conjugacy classes using just
Corollaries~\ref{odd_order} and \ref{simple_-I}. For those classes that cannot
be dealt with using just these two corollaries, we need to do a bit of
computation. To help with this we transcribe a version of Table~1.12.6 in
\cite{gls98}, listing all elements of order 2 in $Z(G_u)$. We will use the
numbering of the simple roots $\al_i$ given in Section~\ref{sec:intro}. For each
$i=1,\dots,n$ let $h_i:=h_{\al_i}(-1)$.

\begin{table}[h!]
\caption{Central elements of order 2}
\begin{tabular}{|c|c|}
\hline
$\Phi$ & elements of order 2 in $Z(G_u)$ \\
\hline
$A_{n-1}$ ($n$ even) & $h_1h_3\cdots h_{n-1}$ \\
\hline
$A_{n-1}$ ($n$ odd) & none \\
\hline
$B_n$ & $h_n$ \\
\hline
$C_n$ & $h_1h_3\cdots h_k$; $k=2\left\lfloor\frac{n-1}{2}\right\rfloor+1$ \\
\hline
$D_{2\ell}$ & $h_1h_3\cdots h_{2\ell-1}$, $h_{2\ell-1}h_{2\ell}$, $h_1h_3\cdots
h_{2\ell-3}h_{2\ell}$ \\
\hline
$D_{2\ell+1}$ & $h_{2\ell}h_{2\ell+1}$ \\
\hline
$E_6$ & none \\
\hline
$E_7$ & $h_1h_3h_5$ \\
\hline
~ & In all other cases $Z(G_u)=1$ \\
\hline
\end{tabular}
\label{tab:center}
\end{table}

\section{Spin signatures of Coxeter elements}\label{sec:cox_elts}
At this point we declare that we only consider fields $K$ with characteristic
different than 2. If the characteristic is 2 then $T_0=\{1\}$, and all elliptic
elements have spin 1, so this case is trivial.

\begin{lemma}\label{orth_m_commute}
Let $\al,\be$ be orthogonal roots in a root system $\Phi$. If the $\al$-chain of
roots through $\be$ is just $\be$, then $[m_{\al},m_{\be}]=1$.
\end{lemma}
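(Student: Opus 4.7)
The plan is to apply (CR1) with the orthogonality hypothesis to reduce the claim to showing that the sign $c(\al,\be)$ is $+1$, and then settle that sign by showing directly that each $x_{\pm\al}(s)$ commutes with each $x_{\pm\be}(t)$. Since $\al\perp\be$ we have $s_{\al}\be=\be$, so (CR1) gives $m_{\al}(1)m_{\be}(1)m_{\al}(1)\I=m_{\be}(c(\al,\be))$, and the statement $[m_{\al},m_{\be}]=1$ is equivalent to $c(\al,\be)=1$.

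The cleaner route is simply to expand $m_{\al}(1)=x_{\al}(1)x_{-\al}(-1)x_{\al}(1)$ and $m_{\be}(1)=x_{\be}(1)x_{-\be}(-1)x_{\be}(1)$ and show each factor of the first commutes with each factor of the second; this will force the two $m$-words to commute without any appeal to the sign $c(\al,\be)$. First I would observe that because $\al\perp\be$, the $\al$-chain through $\be$ is symmetric about $\be$, so the hypothesis $p=q=0$ gives $\be\pm\al\notin\Phi$; negating, $-\be\pm\al\notin\Phi$ as well, handling all four sign combinations at the ``$(i,j)=(1,1)$'' level.

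The key step is then the Chevalley commutator formula, which asserts that for non-proportional $\ga,\de\in\Phi$, $[x_\ga(s),x_\de(t)]$ is a product of $x_{i\ga+j\de}$ over those pairs $i,j\geq1$ with $i\ga+j\de\in\Phi$. Applied with $\ga\in\{\pm\al\}$ and $\de\in\{\pm\be\}$, the commutator will be trivial provided no $i\al+j\be$ with $i,j\geq1$ is a root. Since $\al\perp\be$, we have $|i\al+j\be|^2=i^2|\al|^2+j^2|\be|^2$, and a short case check on the possible crystallographic length ratios ($1$, $\sqrt2$, $\sqrt3$) shows that any such length matching a root in the ambient irreducible component forces $(i,j)=(1,1)$ with $\al+\be\in\Phi$---which is excluded by our chain hypothesis. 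With all four relevant commutators trivial, $m_{\al}(1)$ and $m_{\be}(1)$ commute factor by factor.

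The main obstacle is the length-squared verification in the last paragraph; it is routine but not automatic from the hypothesis, since the chain condition directly rules out only $\be\pm\al\in\Phi$, not higher combinations. One should either present the case split over length ratios explicitly or invoke the well-known fact that orthogonal roots whose sum is not a root are ``strongly orthogonal,'' so that no positive-integer combination of them lies in $\Phi$.
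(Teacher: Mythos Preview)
Your argument is correct, but it takes a different route from the paper's. The paper simply invokes the proof of \cite{carter72book}*{Proposition~6.4.3} to conclude directly that $c(\al,\be)=1$ whenever the $\al$-chain through $\be$ is the singleton $\{\be\}$, and then applies (CR1) together with $s_{\al}\be=\be$. Your approach instead bypasses the sign $c(\al,\be)$ altogether: you expand $m_{\al}(1)$ and $m_{\be}(1)$ as words in the $x_{\pm\al}$ and $x_{\pm\be}$ and use the Chevalley commutator formula plus a length argument to show that every $x_{\pm\al}(s)$ commutes with every $x_{\pm\be}(t)$. This actually proves the stronger statement that the root subgroups $U_{\pm\al}$ and $U_{\pm\be}$ commute elementwise, and it is entirely self-contained---no need to chase the sign computation inside Carter's proof. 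The trade-off is that the paper's one-line citation is much shorter, and knowing $c(\al,\be)=1$ explicitly is sometimes useful in its own right (for instance when one wants the sign and not just commutativity). Your length-squared case check is sound; the only case in which $i^2|\al|^2+j^2|\be|^2$ can match a root length with $i,j\ge 1$ is $(i,j)=(1,1)$ with both roots short in a system of ratio $\sqrt2$, and that is precisely the situation excluded by the chain hypothesis.
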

\begin{proof}
Since the $\al$-chain of roots through $\be$ is just $\be$, $c(\al,\be)=1$ by
the proof of \cite{carter72book}*{Proposition~6.4.3}. Since $\al,\be$ are
orthogonal, $s_{\al}\be=\be$, and so by (CR1) indeed $[m_{\al},m_{\be}]=1$.
\end{proof}
This holds for example if $\al$ and $\be$ are orthogonal \emph{long} roots.
Also, if $\Phi$ is simply laced then any two orthogonal roots will have this
property. The next lemma is a version of \cite{gls98}*{Theorem~1.12.1(e)}, which
is standard, and we will not prove it here.

\begin{lemma}\label{h_into_simples}
For any root $\al$, if
$\dis\frac{\al}{\gen\al,\al\by}=\sum_{i=1}^nc_i\frac{\al_i}{\gen\al_i,\al_i\by}$
then $h_{\al}(-1)=h_1^{c_1}\cdots h_n^{c_n}$.
\end{lemma}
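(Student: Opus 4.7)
The plan is to reduce the identity to a character-theoretic calculation on the torus of the universal (simply connected) group $G_u$. I would first note that the epimorphism $G_u\twoheadrightarrow G$ carries $h_\al(-1)$ to $h_\al(-1)$, so any identity of the form $h_\al(-1)=\prod h_i^{c_i}$ in $T_u$ descends to the corresponding identity in $T$; it therefore suffices to prove the identity inside $G_u$.

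In the simply connected group, the maximal torus $T_u$ is canonically isomorphic, as an abelian group, to $\operatorname{Hom}_\Z(P,K^*)$, where $P$ is the full weight lattice, so two elements of $T_u$ coincide if and only if they have the same image under every character $\chi$ of $T_u$. The strategy is then to verify the identity character by character, relying on the standard formula
$$\chi(h_\al(t))=t^{\gen\chi,\al^\vee\by},$$
where $\al^\vee=2\al/\gen\al,\al\by$. The exponent is linear in $\al^\vee$: writing $\al^\vee=\sum_i c_i\al_i^\vee$ as in the hypothesis gives
$$\chi(h_\al(t))=t^{\sum_i c_i\gen\chi,\al_i^\vee\by}=\prod_i\chi(h_{\al_i}(t))^{c_i}=\chi\Bigl(\prod_i h_{\al_i}(t)^{c_i}\Bigr),$$
so $h_\al(t)=\prod_i h_{\al_i}(t)^{c_i}$ in $T_u$, and one sets $t=-1$ to finish.

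The only place where genuine work is needed is the formula $\chi(h_\al(t))=t^{\gen\chi,\al^\vee\by}$; this is where the explicit definition of $h_\al$ as a product of $x_\al$'s enters, and it is essentially the content of \cite{gls98}*{Theorem~1.12.1(e)} or \cite{steinberg67}*{Lemma~28}. A more elementary alternative that avoids character theory is induction on the length of a $w\in W$ with $\al=w\al_j$ for some simple $\al_j$: lifting $w$ to $\tilde w\in N_0$ and applying (CR2) gives $h_\al(-1)=\tilde w h_j\tilde w^{-1}$, and induction on the length, together with the compatibility $(w\ga)^\vee=w(\ga^\vee)$ of the Weyl action on coroots, then expands the right-hand side in terms of the simple $h_i$.
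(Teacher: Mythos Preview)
Your argument is correct. The paper itself does not prove this lemma---it declares it standard and simply cites \cite{gls98}*{Theorem~1.12.1(e)}---and your character-theoretic reduction to the formula $\chi(h_\al(t))=t^{\gen\chi,\al^\vee\by}$ is precisely the content of that cited result, so your proposal is in complete alignment with what the paper defers to.
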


Let $w\in W$ be elliptic with order $d$, and let $\G$ be its Carter diagram. Let
$\G=\G_1\times\cdots\times\G_r$ be a decomposition of $\G$ into connected
components. Note that roots labeling nodes of different components $\G_i$ are
orthogonal. We know that $w=w_1\cdots w_r$ where each $w_i$ has Carter diagram
$\G_i$ and all the $w_i$ commute with each other (though note that the $w_i$ are
not elliptic). Let $d_i$ denote the order of $w_i$, so $d$ is the least common
multiple of the $d_i$.

\begin{definition}\label{content}
If $\G'$ is the Carter diagram of $w'\in W$ and $w'$ has order $d'$, define the
\emph{content} of $\G'$ to be the power of 2 in the prime factorization of $d'$.
If $\G=\G_1\times\cdots\times\G_r$ and $w=w_1\cdots w_r$ as above then any
$\G_i$ with the same content as $\G$ will be called a \emph{relevant} component.
All other $\G_i$ will be called \emph{irrelevant}.
\end{definition}

The point of this definition is that if $\G_i$ is an irrelevant component, with
$g_i\in N_0$ representing $w_i$, then
$$g_i^d=g_i^{d_i\frac{d}{d_i}}=1$$
since $g_i^{d_i}\in T_0$ and 2 divides $d/d_i$. Thus the irrelevant components
in some sense do not contribute to the spin signature of $w$.

\begin{example}
Consider an elliptic element $w$ in $W=W(C_6)$ with Carter diagram $\G=C_2\times
C_4$. (This exists by \cite{carter72}*{Proposition~24}.) If $w=w_1w_2$ is the
corresponding decomposition, we see that $w_1$ has order 4 and $w_2$ has order
8, so $w$ has order 8 and the $C_2$ factor of $\G$ is irrelevant. If
$g_0=g_1g_2$ is the corresponding representative of $w=w_1w_2$ in $N_0$ then
$g_1^8=1$. It turns out the $g_i$ commute (see Section~\ref{sec:C}), so
$g_0^8=g_2^8$.
\end{example}

To calculate $g_0^d$ in general we need to find a way to calculate these powers
of representatives of relevant components, and then combine them in the correct
way. Let $\G_i$ be a relevant component that is itself a Dynkin diagram, and let
$w_i$ be as above. Without loss of generality we may assume $w_i$ has order $d$.
Let $\be_1,\dots,\be_{n_i}$ be the roots labeling each node. For our
representative of $w_i$ in $N_0$ we take $g_i=m_1\cdots m_{n_i}$, where
$m_j:=m_{\be_j}(1)$. Let $S':=\{s_1,\dots,s_{n_i}\}$ where $s_j=s_{\be_j}$, and
let $W':=\gen S'\by$. Then $W'$ is a Weyl subgroup of $W$ and $(W',S')$ is a
spherical Coxeter system with Coxeter diagram $\G_i$. To calculate $g_i^d$ we
will use (CR2), and a series of relations that are closely related to the
defining relations of $W'$.

First, we make the assumption that if $\be_j$ and $\be_k$ are orthogonal, then
$m_j$ and $m_k$ commute, for all $1\leq j,k\leq n_i$. This assumption will need
to be checked on a case-by-case basis using Lemma~\ref{orth_m_commute}, but at
least if $\Phi$ is simply connected we get it for free. This mirrors the
relation $(s_js_k)^2=1$ in $W'$, and we also have an immediate analogue to the
relation $s_j^2=1$, namely $m_j^2=h_{\be_j}(-1)$, which is just true by
construction. This leaves the braid relations involving non-orthogonal roots.

Let $\be_j$ and $\be_k$ be non-orthogonal roots labeling nodes of $\G_i$. Since
$W'$ is a Weyl subgroup of $W$, $(s_js_k)$ must have order either 3, 4, or 6.
The order 6 case corresponds to a triple edge between the two nodes, which only
appears if $\Phi$ is $G_2$, and this case is easily covered in
Section~\ref{sec:G2} using only Corollary~\ref{odd_order}. As such we can ignore
this case, and assume the nodes have either a single edge or a double edge.

\begin{lemma}\label{braid}
With notation as above, if $\be_j$ and $\be_k$ label nodes connected by a single
edge then $(m_jm_k)^3=1$, and if they label nodes connected by a double edge,
with $\be_k$ the short root, then
$(m_jm_k)^4=(m_km_j)^4=h_{\be_k}(-1)$.
\end{lemma}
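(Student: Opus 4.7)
The plan is to compute the relevant power of $m_j m_k$ in closed form inside $N_0$, using (CR1), (CR2), the identity $m_\alpha(1)^2 = h_\alpha(-1)$, and Lemma~\ref{h_into_simples}. All computations take place inside the rank-2 sub-root system $\Phi' \subseteq \Phi$ generated by $\beta_j$ and $\beta_k$, which has type $A_2$ in the single-edge case and type $B_2$ (with $\beta_k$ short) in the double-edge case. For brevity write $h_\ell := h_{\beta_\ell}(-1)$ for $\ell \in \{j,k\}$.

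Two ingredients need to be set up first. The identity $m_\alpha(1)^2 = h_\alpha(-1)$ follows because $h_\alpha$ is a group homomorphism in its argument, so $h_\alpha(1) = m_\alpha(1)m_\alpha(-1) = 1$ forces $m_\alpha(-1) = m_\alpha(1)^{-1}$; then $h_\alpha(-1) = m_\alpha(-1)^2 = m_\alpha(1)^{-2}$, and since $h_\alpha(-1)$ is an involution, $m_\alpha(1)^2 = h_\alpha(-1)$. The second ingredient is the braid identity $m_j m_k m_j = m_k m_j m_k$ (single edge) and $m_j m_k m_j m_k = m_k m_j m_k m_j$ (double edge). I would derive these from (CR1) by writing each side as an $m_\mu(\pm 1)$ (with $\mu \in \Phi'$) times an $h$-element and checking that the two signs $c(\beta_j,\beta_k)$ and $c(\beta_k,\beta_j)$ come out opposite, so that the discrepancy $m_\mu(1)^2 = h_\mu(-1)$ balances against $h_k h_j^{-1}$ via Lemma~\ref{h_into_simples}. (Alternatively one may cite Tits' classical theorem that the section $s_\alpha \mapsto m_\alpha(1)$ respects braid relations.)

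With the braid identity in hand, the single-edge computation collapses $(m_j m_k)^3$ to $m_j^2 m_k m_j^2 m_k = h_j m_k h_j m_k$ via one braid move. Pushing $h_j$ across $m_k$ using (CR2) yields $h_{s_k\beta_j}(-1) = h_{\beta_j+\beta_k}(-1)$, and since $\Phi'$ is simply laced Lemma~\ref{h_into_simples} identifies this as $h_j h_k$; the total collapses to $h_j \cdot h_j h_k \cdot h_k = 1$. For the double edge, a single braid move rewrites $(m_j m_k)^4$ as $m_k m_j m_k h_j m_k m_j m_k$; applying (CR2) with $s_k \beta_j = \beta_j + 2\beta_k$ and invoking Lemma~\ref{h_into_simples} (coefficients $(1,1)$ for the long root $\beta_j + 2\beta_k$, giving $h_{\beta_j + 2\beta_k}(-1) = h_j h_k$), together with the easy identity $m_j h_j m_j = 1$, collapses the whole product to $m_k^2 = h_k$. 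For $(m_k m_j)^4$ I would use the conjugation identity $(m_k m_j)^4 = m_j^{-1} (m_j m_k)^4 m_j = m_j^{-1} h_k m_j$; (CR2) rewrites this as $h_{s_j \beta_k}(-1) = h_{\beta_j + \beta_k}(-1)$, and Lemma~\ref{h_into_simples} applied to the short root $\beta_j + \beta_k$ (coefficients $(2,1)$) also gives $h_k$.

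The main obstacle is the first step: justifying the braid identity requires carefully tracking the signs that (CR1) introduces and confirming they are opposite. Once that is done, the remainder is a routine cancellation in the abelian group $T_0$ driven by Lemma~\ref{h_into_simples}.
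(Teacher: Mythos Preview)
Your approach is correct but takes a different route from the paper's. The paper computes $(m_jm_k)^3$ and $(m_jm_k)^4$ directly by iterating (CR1) and invoking the explicit sign relations from \cite{carter72book}*{Proposition~6.4.3} (namely $c(\be_j,\be_k)=-c(\be_k,\be_j)$ and the product rule $c(\be_k,\be_j)c(\be_k,s_k\be_j)=\mp1$ in the two cases), so the signs cancel in the course of the computation and no intermediate braid identity is ever stated. You instead isolate the braid relation $m_jm_km_j=m_km_jm_k$ (resp.\ $m_jm_km_jm_k=m_km_jm_km_j$) as a first step and then finish with (CR2) and Lemma~\ref{h_into_simples}; your reductions $h_j m_k h_j m_k=1$ and $m_km_jm_k\,h_j\,m_km_jm_k=h_k$ are clean and correct, as is your conjugation argument for $(m_km_j)^4$. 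The tradeoff is that the paper's direct computation is self-contained, while your version pushes all the sign-tracking into the braid step; once that is granted (e.g.\ by Tits' theorem on the canonical section), your remaining algebra is shorter and more transparent.

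One caveat: your sketch for deriving the braid identity from (CR1), ``writing each side as an $m_\mu(\pm1)$ times an $h$-element,'' works exactly as stated only in the single-edge case, where both $m_jm_km_j$ and $m_km_jm_k$ equal $m_{\be_j+\be_k}(\pm1)$ times a torus element. In the double-edge case the two four-term products project to $-I\in W(B_2)$, not to a single reflection, so neither side has that form; you would need either a two-step (CR1) reduction tracking the additional relation $m_k(\la)m_{s_j\be_k}(\mu)=m_{s_j\be_k}(-\mu)m_k(\la)$ the paper uses, or simply to invoke Tits' theorem as you suggest.
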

\begin{proof}
First suppose it is a single edge, so $(s_js_k)$ has order 3. Note that
$m_km_j=m_{s_k\be_j}(c(\be_k,\be_j))m_k$ by (CR1). Also, by Proposition~6.4.3 in
\cite{carter72book}, $c(\be_j,\be_k)=-c(\be_k,\be_j)$ and
$c(\be_k,\be_j)c(\be_k,s_k\be_j)=-1$. Thus
\begin{align*}
(m_jm_k)^3&= m_j m_{s_j\be_k}(c(\be_k,\be_j))
m_j(c(\be_k,\be_j)c(\be_k,s_k\be_j)) m_k^3\\
&=m_k(c(\be_k,\be_j)c(\be_j,s_k\be_j)) m_j m_j(c(\be_k,\be_j)c(\be_k,s_k\be_j))
m_k^3\\
&=m_km_jm_j\I m_k^3=1.
\end{align*}
In other words, the braid relation $s_js_ks_j=s_ks_js_k$ lifts to
$m_jm_km_j=m_k\I m_j\I m_k\I$ in $N_0$.

Now suppose it is a double edge, so $(s_js_k)$ has order 4, and assume $\be_k$
is the short root. Proposition~6.4.3 in \cite{carter72book} tells us that now
$c(\be_k,\be_j)c(\be_k,s_k\be_j)=1$, and that
$m_k(\la)m_{s_j\be_k}(\mu)=m_{s_j\be_k}(-\mu)m_k(\la)$. By repeated application
of (CR1) we thus get that
\begin{align*}
(m_jm_k)^4&=m_{s_j\be_k}(c(\be_j,\be_k))m_k(c(\be_j,\be_k)c(\be_j,s_j\be_k))m_{
s_j\be_k}(c(\be_j,s_j\be_k))m_km_j^4\\
&=m_{s_j\be_k}(c(\be_j,\be_k))m_{s_j\be_k}(-c(\be_j,s_j\be_k))m_k(c(\be_j,
\be_k)c(\be_j,s_j\be_k))m_k\\
&=h_{\be_k}(-1).
\end{align*}
In other words, the braid relation $s_js_ks_js_k=s_ks_js_ks_j$ lifts to
$$m_jm_km_jm_k=h_{\be_k}(-1)m_k\I m_j\I m_k\I m_j\I$$
in $N_0$. Since $(m_km_j)^4$ just equals $m_k(m_jm_k)^4m_k\I$ we also
immediately get that $(m_km_j)^4=h_{\be_k}(-1)$.
\end{proof}

Note that these relations, plus (CR2), really are sufficient to calculate
$g_i^d$. This is because the corresponding relations in $W'$ are sufficient to
prove $w_i^d=1$, and then (CR2) is enough to identify the correct element of
$T$. It is very important that these relations are completely local, i.e., they
only depend on the roots involved and not on the global structure of $\Phi$, and
in particular don't require us to know the sign of any $c(\al,\be)$. The only
assumption we have made is that any $m_j,m_k$ corresponding to orthogonal
$\be_j,\be_k$ should commute. The fact that these relations only depend on the
roots means that, to calculate $g_i^d$ for a relevant component $\G_i$ that is a
Dynkin diagram, we can actually just calculate the spin signature of a Coxeter
element in the Weyl group with $\G_i$ as its Dynkin diagram. This will work as
long as we choose $g_i$ correctly, i.e., as a product of $m_{\al}(1)$ where
$\al$ ranges over $\G_i$. For example, it turns out that Coxeter elements in
$W(A_3)$ have spin signature $h_1h_3$, and so if some relevant component $\G_i$
is of type $A_3$ and is labeled by $\be_1,\be_2,\be_3$ (and if $m_{\be_1}$ and
$m_{\be_3}$ commute), then $g_i^d=h_{\be_1}(-1)h_{\be_3}(-1)$. We can then use
Lemma~\ref{h_into_simples} to express $g_i^d$ as a product of $h_j$.

To illustrate that we can calculate $g_i^d$ without knowing the various
$c(\al,\be)$, we do the example of $A_3$ here (with $d=4$).

\begin{example}\label{A3_example}
Suppose $\be_1,\be_2,\be_3$ are roots labeling nodes in $\G$ such that
$$\xy
(2,2.5)*{\be_1}; (11,2.5)*{\be_2}; (20,2.5)*{\be_3};
(2,0)*{\circ}; (11,0)*{\circ} **\dir{-}; (20,0)*{\circ} **\dir{-};
\endxy$$
is a connected component of $\G$. Let $m_i=m_{\be_i}(1)$ and assume that $m_1$
and $m_3$ commute. We now show that $(m_1m_2m_3)^4=h_{\be_1}(-1)h_{\be_3}(-1)$,
using only the relations $m_i^2=h_{\be_i}(-1)$, $m_1m_2m_1=m_2\I m_1\I m_2\I$,
$m_2m_3m_2=m_3\I m_2\I m_3\I$, $m_1m_3=m_3m_1$, and (CR2). For brevity we will
write $h_i$ for $h_{\be_i}(-1)$, but note that $\be_i$ probably is different
than the simple root $\al_i$ in $\Phi$.
\begin{align*}
(m_1m_2m_3)^4&=(m_1m_2m_1m_3m_2m_3)^2\\
&=(m_2\I m_1\I m_2\I m_2\I m_3\I m_2\I )^2\\
&=m_2\I m_1\I h_2 m_3\I h_2 m_1\I h_2 m_3\I m_2\I \\
&=h_1(h_1h_2h_3)h_3 m_2\I h_1h_3 m_2\I \\
&=h_2(h_1h_2)(h_2h_3)h_2=h_1h_3
\end{align*}
The last two lines made repeated use of (CR2) and Lemma~\ref{h_into_simples}.
\end{example}

We could theoretically develop an algorithm to calculate the spin signatures of
Coxeter elements for any $\Phi$ in this way, but this would not be a realistic
way to calculate the spin of an arbitrary elliptic element. The point is that
since any such calculations depend only on the roots in the diagram and not on
$\Phi$, we don't have to do this, provided we can calculate the spin signatures
of Coxeter elements some other way. We can in fact do this for $A_{n-1}$, $B_n$,
$C_n$, and $E_7$, and this turns out to be sufficient.

\subsection{Coxeter elements in $A_{n-1}$}\label{sec:ACox}
The results for this case are well-known but we present them for completeness.
By Proposition~23 in \cite{carter72}, the Coxeter elements are the only elliptic
elements in $W=W(A_{n-1})$. Thinking of $W$ as $S_n$, these are precisely the
$n$-cycles. If $n$ is odd then these all have odd order and thus spin 1 by
Corollary~\ref{odd_order}. Suppose now that $n$ is even. By
\cite{gls98}*{Theorem~1.10.7(a)} we know that $G$ is a quotient of $\SL_n(K)$ by
a central subgroup $Z'$. Let $w$ be a Coxeter element and $g_0$ a representative
in $N_0$. Since $w$ is an odd permutation, and $g_0$ has determinant 1, we see
that an odd number of entries of $g_0$ are -1. Thus $g_0^n=-I_n$, and so $w$ has
spin 1 if $-I_n\in Z'$ and spin $-1$ if $-I_n\not\in Z'$. In particular all
elliptic elements of $W(A_{n-1})$ have universal spin $-1$ if $n$ is even. Also
note that when $n$ is even, $-I_n$ is the unique element of order 2 in $Z(G_u)$,
so by Table~\ref{tab:center} the spin signature $g_0^n$ must equal $h_1h_3\cdots
h_{n-1}$. In particular in the $A_3$ case we get $h_1h_3$, as referenced
earlier.

\subsection{Coxeter elements in $B_n$}\label{sec:BCox}
Let $w\in W=W(B_n)$ be a Coxeter element, with order $2n$. Since $w$ has
characteristic polynomial $t^n+1$, $w$ is linked to $-I$, and any representative
of $w$ raised to the $2n$ will equal $g_0^2$, where $g_0$ represents $-I$ in
$N_0$. It thus suffices to calculate $g_0^2$. Let $\{\be_1,\dots,\be_n\}$ be the
orthonormal basis of roots given in \cite{carter05}*{Section~8.3}, so
$-I=s_{\be_1}\cdots s_{\be_n}$ and $g_0=m_1\cdots m_n$ where
$m_i:=m_{\be_i}(1)$. Note that for any $i\neq j$, the $\be_j$-chain of roots
through $\be_i$ is $\be_i-\be_j,\be_i,\be_i+\be_j$, and $\be_i,\be_j$ are
orthogonal. Thus by the proof of
Proposition~\cite{carter72book}*{Proposition~6.4.3},
$m_im_{\be_j}(\ep)m_i\I=m_{\be_j}(-\ep)$ for $\ep=\pm1$. Moreover, it is
straightforward to calculate that for any $i$, $h_{\be_i}(-1)=h_n$. We can now
calculate $g_0^2$.
\begin{align*}
g_0^2&=m_1\cdots m_nm_1\cdots m_n\\
&=m_1m_1((-1)^{n-1})m_2m_2((-1)^{n-2})\cdots m_nm_n((-1)^0)\\
&=h_n^k
\end{align*}
where $\dis k=\left\lfloor\frac{n+1}{2}\right\rfloor$. Since $h_n\in Z(G_u)$ by
Table~\ref{tab:center}, this tells us that $-I$, and thus any Coxeter element,
has adjoint spin 1. In the universal case the spin is 1 if and only if $n$ is
congruent to 0 or 3 modulo 4.

\subsection{Coxeter elements in $C_n$}\label{sec:CCox}
As in the $B_n$ case, we need to calculate $g_0^2$, where $g_0$ represents $-I$.
We claim that $g_0^2=h_1h_3\cdots h_k$ where $\dis
k=2\left\lfloor\frac{n-1}{2}\right\rfloor+1$. Let $\be_1,\dots,\be_n$ denote the
orthonormal basis of $\langle\Phi\rangle_{\R}$ given in
\cite{carter05}*{Section~8.4}, so $-I=s_{2\be_1}\cdots s_{2\be_n}$ and
$g_0=m_1\cdots m_n$ where $m_i:=m_{2\be_i}(1)$. Since the $2\be_i$ are all long
and are mutually orthogonal, they have trivial root chains through each other
and so the $m_i$ all commute by Lemma~\ref{orth_m_commute}. Thus
$g_0^2=h_{2\be_1}(-1)\cdots h_{2\be_n}(-1)$. Now, for each $i$,
$2\be_i=2\al_i+2\al_{i+1}+\cdots+2\al_{n-1}+\al_n$. By
Lemma~\ref{h_into_simples} then, $h_{2\be_i}(-1)=h_ih_{i+1}\cdots h_n$. The
result now follows immediately. As a consequence we see that $-I$, and thus all
Coxeter elements in $W(C_n)$, have universal spin $-1$, and adjoint spin 1 by
Table~\ref{tab:center}.

\subsection{Coxeter elements in $E_7$}\label{sec:E7Cox}
In type $E_7$, the eigenvalues of a Coxeter element are the primitive
$18_{th}$ roots of unity and -1, so a Coxeter element to the $9_{th}$ power
equals $-I$. Since linked elements have the same spin, as before we actually
want to calculate the spin of $-I$. Let $e_1,\dots,e_8$ be an orthonormal basis
of $\R^8$, with the simple roots given by $\al_1=e_1-e_2$, $\al_2=e_2-e_3$,
$\al_3=e_3-e_4$, $\al_4=e_4-e_5$, $\al_5=e_5-e_6$, $\al_6=e_5+e_6$,
$\al_7=\frac{-1}{2}(e_1+\cdots+e_8)$. (This is as in
\cite{carter05}*{Section~8.7}, though we use different notation.) If we then let
$\be_1=e_1-e_2$, $\be_2=e_1+e_2$, $\be_3=e_3-e_4$, $\be_4=e_3+e_4$,
$\be_5=e_5-e_6$, $\be_6=e_5+e_6$, $\be_7=-e_7-e_8$, the $\be_i$ are mutually
orthogonal so $-I=s_{\be_1}\cdots s_{\be_7}$.

Let $g_0=m_1\cdots m_7$ represent $-I$ in $N_0$, where $m_i:=m_{\be_i}(1)$.
Since $E_7$ is simply laced, by Lemma~\ref{orth_m_commute}
$g_0^2=h_{\be_1}(-1)\cdots h_{\be_7}(-1)$. Now, $\be_1=\al_1$,
$\be_2=\al_1+2\al_2+2\al_3+2\al_4+\al_5+\al_6$, $\be_3=\al_3$,
$\be_4=\al_3+2\al_4+\al_5+\al_6$, $\be_5=\al_5$, $\be_6=\al_6$, and
$\be_7=\al_1+2\al_2+3\al_3+4\al_4+2\al_5+3\al_6+2\al_7$, and so by
Lemma~\ref{h_into_simples}, $g_0^2=h_1h_3h_5$. By Table~\ref{tab:center} this is
precisely the non-trivial element of the center. So $-I$ has universal spin $-1$
and adjoint spin 1.

\section{Orders and spin signatures of elliptic elements}\label{sec:elliptic}
Let $w\in W$ be elliptic with order $d$, and let
$\G$ be its Carter diagram. Let $\G=\G_1\times\cdots\times\G_r$ be a
decomposition of $\G$ into connected components. Since we only care about the
roots labeling the nodes of the relevant components inasmuch as they yield a
certain product of $h_j$ according to Lemma~\ref{h_into_simples}, we devise the
following convenient way to label Carter diagrams, which we will call a
\emph{spin labeling}. If a node is labeled with the root $\al$, we re-label it
with the tuple $(i_1,\dots,i_k)$ such that $h_{\al}(-1)=h_{i_1}\cdots h_{i_k}$.
If $\al=\al_i$ is simple, we just maintain the original ``$i$" label. We will
only need to worry about $\G_i$ that are Dynkin diagrams, so the spin signature
$g_i^d$ is just the product of $h_j$ where $j$ ranges in the appropriate way
over the spin labeling of $\G_i$. For instance if $\G_i$ is
$$\xy
(2,2.5)*{(1,3)}; (11,2.5)*{2}; (22,2.5)*{(2,3,4)};
(2,0)*{\circ}; (11,0)*{\circ} **\dir{-}; (22,0)*{\circ} **\dir{-};
\endxy$$
and relevant, then $g_i^d=(h_1h_3)(h_2h_3h_4)=h_1h_2h_4$, since $\G_i$ has type
$A_3$.

These spin labelings are really only useful when $\G$ is the Dynkin diagram of a
Weyl subgroup of $W$, which is equivalent to saying that $\G$ is cycle-free
\cite{carter72}*{Lemma~8}. Luckily, it will turn out that for all examples where
$\G$ has cycles, we can find its spin just using Corollaries~\ref{odd_order} and
\ref{simple_-I}. Since Carter diagrams that are Dynkin diagrams all arise by an
iterated process of removing nodes from extended Dynkin diagrams, we are
especially interested in the spin labeling of nodes corresponding to
$-\widetilde{\al}$, where $\widetilde{\al}$ is a highest root. In the $B_n$ case
it will be convenient to instead use the highest short root $\widetilde{\al_s}$.
We collect here the decompositions of negative highest roots into simple roots,
only for the cases we will actually use.

\begin{table}[h!]\label{tab:high_to_simple}
\caption{Negative highest roots in terms of simple roots}
\begin{tabular}{|c|c|}
\hline
$B_n$ & $-\widetilde{\al_s}=-\al_1-\al_2-\cdots-\al_n$\\
\hline
$C_n$ & $-\widetilde{\al}=-2(\al_1+\cdots+\al_{n-1})-\al_n$\\
\hline
$E_6$ & $-\widetilde{\al}=-\al_1-2\al_2-3\al_3-2\al_4-2\al_5-\al_6$\\
\hline
$E_7$ & $-\widetilde{\al}=-\al_1-2\al_2-3\al_3-4\al_4-2\al_5-3\al_6-2\al_7$\\
\hline
$E_8$ &
$-\widetilde{\al}=-2\al_1-3\al_2-4\al_3-5\al_4-6\al_5-3\al_6-4\al_7-2\al_8$\\
\hline
$F_4$ & $-\widetilde{\al}=-2\al_1-3\al_2-4\al_3-2\al_4$\\
\hline
\end{tabular}
\end{table}

The spin labeled extended Dynkin diagrams that we will need later can now be
found using Lemma~\ref{h_into_simples}, and are exhibited below. Our general
reference for the extended Dynkin diagrams is the Appendix in \cite{carter05}.
Note that the diagram we need for $B_n$ actually has the negative highest
\emph{short} root added.


$$B_n~~~(n>1)\hspace{.3in}\xy
(-7,2.5)*{n}; (2,2.5)*{1}; (11,2.5)*{2}; (29,2.5)*{n-1}; (38,2.5)*{n};
(2,0)*{\circ}; (11,0)*{\circ} **\dir{-}; (20,0)*{~\cdots~} **\dir{-};
(29,0)*{\circ} **\dir{-}; {\ar@2{<-}(29,0)*{\circ};(38,0)*{\circ}};
{\ar@2{->}(-7,0)*{\circ};(2,0)*{\circ}};
\endxy$$

$$C_n~~~(n>1)\hspace{.3in}\xy
(-8,2.5)*{(1,\dots,n)}; (2,2.5)*{1}; (11,2.5)*{2}; (29,2.5)*{n-1}; (38,2.5)*{n};
(2,0)*{\circ}; (11,0)*{\circ} **\dir{-}; (20,0)*{~\cdots~} **\dir{-};
(29,0)*{\circ} **\dir{-}; {\ar@2{<-}(29,0)*{\circ};(38,0)*{\circ}};
{\ar@2{->}(-8,0)*{\circ};(2,0)*{\circ}};
\endxy$$


$$E_6\hspace{.3in}\xy
(2,2.5)*{1}; (11,2.5)*{2}; (20,2.5)*{3}; (29,2.5)*{5}; (38,2.5)*{6};
(22,-6)*{4}; (27,-12)*{(1,3,6)};
(2,0)*{\circ}; (11,0)*{\circ} **\dir{-}; (20,0)*{\circ} **\dir{-};
(29,0)*{\circ} **\dir{-}; (38,0)*{\circ} **\dir{-}; (20,0)*{\circ};
(20,-6)*{\circ} **\dir{-}; (20,-12)*{\circ} **\dir{-};
\endxy$$

$$E_7\hspace{.3in}\xy
(2,2.5)*{1}; (11,2.5)*{2}; (20,2.5)*{3}; (29,2.5)*{4}; (38,2.5)*{6};
(47,2.5)*{7}; (58,2.5)*{(1,3,6)}; (31,-6)*{5};
(2,0)*{\circ}; (11,0)*{\circ} **\dir{-}; (20,0)*{\circ} **\dir{-};
(29,0)*{\circ} **\dir{-}; (38,0)*{\circ} **\dir{-}; (47,0)*{\circ} **\dir{-};
(58,0)*{\circ} **\dir{-}; (29,0)*{\circ}; (29,-6)*{\circ} **\dir{-};
\endxy$$

$$E_8\hspace{.3in}\xy
(-8.5,2.5)*{(2,4,6)}; (2,2.5)*{1}; (11,2.5)*{2}; (20,2.5)*{3}; (29,2.5)*{4};
(38,2.5)*{5}; (47,2.5)*{7}; (56,2.5)*{8};  (40,-6)*{6};
(-8,0)*{\circ}; (2,0)*{\circ} **\dir{-}; (11,0)*{\circ} **\dir{-};
(20,0)*{\circ} **\dir{-}; (29,0)*{\circ} **\dir{-}; (38,0)*{\circ} **\dir{-};
(47,0)*{\circ} **\dir{-}; (56,0)*{\circ} **\dir{-}; (38,0)*{\circ};
(38,-6)*{\circ} **\dir{-};
\endxy$$

$$F_4\hspace{.3in}\xy
(2,2.5)*{1}; (11,2.5)*{2}; (20,2.5)*{3}; (29,2.5)*{4}; (-8,2.5)*{(2,4)};
(-8,0)*{\circ}; (2,0)*{\circ} **\dir{-}; (11,0)*{\circ} **\dir{-};
{\ar@2{->}(11,0)*{\circ};(20,0)*{\circ}}; (20,0)*{\circ} ; (29,0)*{\circ}
**\dir{-};
\endxy$$

It is certainly possible that a given Carter diagram $\G$ could have more than
one spin labeling. For example the Carter diagram $C_2\times A_1$ in type $C_3$
could have either of the spin labelings below.

$$\xy
(2,2.5)*{(1,2,3)}; (11,2.5)*{1}; (20,2.5)*{3};
{\ar@2{->}(2,0)*{\circ}; (11,0)*{\circ}}; (20,0)*{\circ};
(60,2.5)*{(1,2,3)}; (69,2.5)*{2}; (78,2.5)*{3};
(60,0)*{\circ}; {\ar@2{<-}(69,0)*{\circ}; (78,0)*{\circ}};
\endxy$$
The first is obtained by removing the node labeled ``2" from the extended Dynkin
diagram, and the second by removing the node labeled ``1."

Luckily, as we will see, the spin signature is almost always central and so
different spin labelings will still produce the same spin signature. The example
given here is one of the few for which different spin labelings produce
different spin signatures, namely $h_1$ and $h_2$, as seen in
Section~\ref{sec:C}. In any case, to at least calculate the spin it doesn't
matter which spin labeling we pick for our Carter diagram.

We can now calculate the spin and spin signature of all elliptic elements in any
Weyl group. Let $w\in W$ be elliptic with order $d$, and let $\G$ be its Carter
diagram. Let $\G=\G_1\times\cdots\times\G_r$ be a decomposition of $\G$ into
connected components. We know that $w=w_1\cdots w_r$ where each $w_i$ has Carter
diagram $\G_i$ and all the $w_i$ commute with each other. Let $d_i$ denote the
order of $w_i$, so $d$ is the least common multiple of the $d_i$.

\subsection{The $A_{n-1}$ case}\label{sec:A}
All elliptic elements are Coxeter elements, and so we already calculated their
spin and spin signature in Section~\ref{sec:ACox}.

\subsection{The $B_n$ case}\label{sec:B}
In \cite{fedotov09} it is shown that all elliptic elements in $W=W(B_n)$ have
adjoint spin 1. We now have the tools to calculate the universal spin of any
elliptic element, with the adjoint case as a corollary. Note that these are the
only two cases since $|Z(G_u)|=2$. By \cite{carter72}*{Proposition~24}, each
$\G_i$ is a Dynkin diagram of type $B_{n_i}$ for some $n_i$, and
$n_1+\cdots+n_r=n$. Here $B_1$ is identified with $\widetilde{A}_1$, a single
node corresponding to a short root. By Proposition~24 and Table~2 in
\cite{carter72} each $\G$ arises by an iterated process of attaching a node for
the negative highest short root and removing a node. As seen in the previous
section, these new nodes will all have spin labeling ``$n$." If $g_i$ is the
usual representative of $w_i$ in $N_0$, it is thus easy to calculate $g_i^d$
using Section~\ref{sec:BCox}. The problem though is that the result of
Lemma~\ref{orth_m_commute} does \emph{not} hold, since the negative highest
short roots introduced do not have trivial root chains through each other.
Luckily by Table~\ref{tab:center} $h_n$ is central, and so it is not too
difficult to calculate the spin signature of $w$ explicitly.

Let $g_0\in N_0$ represent $w$. Let $f$ be the number of relevant $\G_i$ such
that $n_i\equiv_41,2$. If $d\equiv_40$ or $r\equiv_40,1$ then set $e:=f$. If
$d\equiv_42$ and $r\equiv_42,3$ then set $e:=f+1$. Note that $d$ is even so
these are the only possibilities.

\begin{theorem}\label{type_B_complete}
With the above setup, $g_0^d=h_n^e$. In particular all elliptic $w$ have adjoint
spin 1.
\end{theorem}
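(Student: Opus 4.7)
The plan is to exploit centrality of $h_n\in Z(G_u)$ by splitting $g_0^d$ into a ``diagonal'' product $g_1^d\cdots g_r^d$ and a correction coming from the noncommutativity of the $g_i$. First, write $g_0=g_1\cdots g_r$ with $g_i=\prod_{\be}m_{\be}(1)$ ranging over the roots labeling $\G_i$. Since $W(B_n)$ is the group of signed permutations, each component $\G_i$ corresponds to a block $I_i\subseteq\{1,\dots,n\}$ of size $n_i$ and lives in the subspace $V_i=\operatorname{span}(e_j : j\in I_i)$. A short induction on the ``attach $-\widetilde{\al_s}$, then remove a node'' procedure shows that $\G_i$ contains \emph{exactly one} short-root label (sitting at an outer end of its chain); this short root is some $\pm e_j$ with $j\in I_i$, and by Lemma~\ref{h_into_simples} satisfies $h_{\be}(-1)=h_n$ in the ambient $B_n$.

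Next I would compute each $g_i^d$ by appealing to Section~\ref{sec:BCox}. The calculation there used only local Chevalley relations — (CR1) and $m_{\be}(1)^2=h_{\be}(-1)$ — so it applies inside the $B_{n_i}$-subsystem and yields $g_i^{d_i}=h_n^{\lfloor(n_i+1)/2\rfloor}$ where $d_i=2n_i$. For irrelevant $\G_i$ the definition forces $g_i^d=1$; for relevant $\G_i$, $d/d_i$ is odd, and a quick parity check shows $\lfloor(n_i+1)/2\rfloor$ is odd precisely when $n_i\equiv_4 1,2$. Thus $g_1^d g_2^d \cdots g_r^d=h_n^f$.

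The key technical step is the commutator analysis between distinct $g_i,g_j$. For orthogonal $\al\in V_i$, $\be\in V_j$ with $i\neq j$, Lemma~\ref{orth_m_commute} together with an inspection of root chains in $B_n$ shows $[m_{\al},m_{\be}]=1$ \emph{unless} both $\al$ and $\be$ are short; in that exceptional case, (CR1) combined with $s_{\al}\be=\be$ and $h_{\be}(-1)=h_n$ yields $[m_{\al},m_{\be}]=h_n$. Since each $\G_i$ contributes exactly one short root, this gives the clean relation $g_ig_j=h_n\cdot g_jg_i$ for all $i\neq j$, with $h_n$ central. Sorting $(g_1g_2\cdots g_r)^d$ into $g_1^dg_2^d\cdots g_r^d$ requires $\binom{d}{2}\binom{r}{2}$ adjacent $g$-letter swaps, each contributing one factor of $h_n$, whence $g_0^d=h_n^{f+\binom{d}{2}\binom{r}{2}}$.

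Finally, since $h_n^2=1$, the correction exponent $\binom{d}{2}\binom{r}{2}$ is odd exactly when both binomial coefficients are odd — that is, when $d\equiv_4 2$ and $r\equiv_4 2,3$ — which recovers the definition of $e$. The adjoint spin $1$ assertion is immediate since $h_n\in Z(G_u)$ by Table~\ref{tab:center}. The main obstacle I anticipate is the inductive verification of the one-short-root-per-component claim, which requires carefully checking that every admissible node removal from an extended $B_k$ diagram (with $-\widetilde{\al_s}$ attached at an outer end) either leaves a single component with one short root at an outer end or splits into two pieces, each with that same property.
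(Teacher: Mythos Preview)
Your proposal is correct and follows essentially the same approach as the paper's proof: establish $g_ig_j=h_n\,g_jg_i$ via the single short--short commutator, reduce $g_1^d\cdots g_r^d$ to $h_n^f$ via Section~\ref{sec:BCox}, and track the sorting correction. Your correction exponent $\binom{d}{2}\binom{r}{2}$ agrees mod~$2$ with the paper's $\frac{dr(r-1)}{4}$ since $d$ is even, and your anticipated ``obstacle'' is a non-issue: Carter's Proposition~24 already gives that each $\G_i$ is a $B_{n_i}$ Dynkin diagram, hence carries exactly one short root at an end.
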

\begin{proof}
For each $i$ let $g_i\in N_0$ be the standard representative given by the
product of $m_{\al}(1)$ as $\al$ ranges over $\G_i$. Without loss of generality
$g_0=g_1\cdots g_r$. By Section~\ref{sec:BCox} and the fact that all nodes of
$\G$ corresponding to short roots have spin labeling ``$n$," it is immediate
that $g_1^d\cdots g_r^d=h_n^f$. We now claim that for any $i\neq j$,
$g_ig_jg_i\I=g_jh_n$. Indeed, if $\G_i$ is labeled by the roots
$\be_1,\dots,\be_{n_i}$ and $\G_j$ by $\ga_1,\dots,\ga_{n_j}$ (with $\be_{n_i}$
and $\ga_{n_j}$ short), then by Lemma~\ref{orth_m_commute} $m_{\be_k}(1)$
commutes with $m_{\ga_{\ell}}(1)$ for all $(k,\ell)\neq(n_i,n_j)$. Also, the
$\be_{n_i}$-chain of roots through $\ga_{n_j}$ is
$\ga_{n_j}-\be_{n_i},\ga_{n_j},\ga_{n_j}+\be_{n_i}$, so
$m_{\be_{n_i}}(1)m_{\ga_{n_j}}(1)m_{\be_{n_i}}(1)\I=m_{\ga_{n_j}}(-1)$. Since
$m_{\ga_{n_j}}(-1)=m_{\ga_{n_j}}(1)h_{\ga_{n_j}}(-1)$ and
$h_{\ga_{n_j}}(-1)=h_n$, in fact
$m_{\be_{n_i}}(1)m_{\ga_{n_j}}(1)m_{\be_{n_i}}(1)\I=m_{\ga_{n_j}}(1)h_n$. This
proves our claim that $g_ig_jg_i\I=g_jh_n$.

It is now a straightforward exercise to calculate $(g_1\cdots g_r)^d$ in terms
of $g_1^d,\dots,g_r^d$. Since $h_n$ is central and $T_0$ is abelian, we get the
following:
\begin{align*}
g_0^d&=(g_rh_n^{r-1}g_rh_n^{2(r-1)}\cdots
g_rh_n^{d(r-1)})\cdots(g_2h_ng_2h_n^2\cdots g_2h_n^d)g_1^d\\
&=g_1^d\cdots g_r^d h_n^{d/2}h_n^{2d/2}\cdots h_n^{(r-1)d/2}\\
&=h_n^f h_n^{\frac{d}{2}(1+2+\cdots+(r-1))}\\
&=h_n^f h_n^{\frac{dr(r-1)}{4}}
\end{align*}
If $d\equiv_40$ or $r\equiv_40,1$, we see that $g_0^d=h_n^f=h_n^e$. If
$d\equiv_42$ and $r\equiv_42,3$ then $g_0^d=h_n^fh_n=h_n^e$.
\end{proof}

\begin{corollary}\label{B_complete_char_poly}
Let $w\in W(B_n)$ be elliptic with characteristic polynomial
$(t^{n_1}+1)\cdots(t^{n_r}+1)$. Then $w$ has spin signature $h_n^e$ where $e$ is
as in Theorem~\ref{type_B_complete}.
\end{corollary}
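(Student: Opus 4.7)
The plan is to derive this corollary directly from Theorem \ref{type_B_complete}, by showing that the hypothesis on the characteristic polynomial determines (up to reordering) precisely the data $r$, $d$, and the multiset $\{n_i\}$ used in that theorem.

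First I would invoke Proposition~24 and Table~2 of \cite{carter72}, which say that for an elliptic $w\in W(B_n)$ the Carter diagram decomposes as $\G=\G_1\times\cdots\times\G_r$ with each $\G_i$ of type $B_{n_i}$ and $\sum n_i=n$, and that the component $\G_i$ contributes exactly the factor $(t^{n_i}+1)$ to the characteristic polynomial of $w$. The latter is because $w_i$ is a Coxeter element of the $W(B_{n_i})$ subsystem labeled by the nodes of $\G_i$, whose characteristic polynomial on its support is $t^{n_i}+1$ (and whose order is $d_i=2n_i$).

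Next, from the given factorization $(t^{n_1}+1)\cdots(t^{n_r}+1)$ I can read off $r$ and the multiset $\{n_1,\ldots,n_r\}$. Then $d=\mathrm{lcm}(d_i)=2\cdot\mathrm{lcm}(n_1,\ldots,n_r)$, so the content of $\G$ is determined, and a component $\G_i$ is relevant precisely when $v_2(n_i)=\max_j v_2(n_j)$ (where $v_2$ denotes the $2$-adic valuation). Hence the integer $f$ (the number of relevant components with $n_i\equiv_4 1,2$) is determined, and so is the integer $e$ by the case distinction given in Theorem \ref{type_B_complete}. Applying that theorem yields $g_0^d=h_n^e$ as desired.

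There is no real obstacle here; the content of the corollary is the observation that the characteristic polynomial is a complete invariant for the data appearing in Theorem \ref{type_B_complete}. The only point requiring care is the correspondence between a $B_{n_i}$-component of the Carter diagram and the polynomial factor $(t^{n_i}+1)$, which is immediate from the computation of the characteristic polynomial of a Coxeter element of $W(B_{n_i})$ together with the fact that components of the Carter diagram label mutually orthogonal sub-root-systems, so the characteristic polynomial of $w$ factors as the product of the characteristic polynomials of the $w_i$.
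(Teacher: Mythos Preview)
Your proposal is correct and follows exactly the same approach as the paper's proof, which simply cites \cite{carter72}*{Proposition~24} to identify the Carter diagram as $B_{n_1}\times\cdots\times B_{n_r}$ and then invokes Theorem~\ref{type_B_complete}. You have merely spelled out in more detail why the factorization of the characteristic polynomial pins down the data $r$, $d$, and the relevance of each component, which the paper leaves implicit.
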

\begin{proof}
By \cite{carter72}*{Proposition~24} $w$ has Carter diagram of type
$B_{n_1}\times\cdots\times B_{n_r}$, and the result is immediate from
Theorem~\ref{type_B_complete}.
\end{proof}

In summary, all elliptic elements in $W(B_n)$ have adjoint spin 1, and we can
calculate the universal spin just knowing the characteristic polynomial of $w$.
Many conjugacy classes have universal spin 1, and many have universal spin $-1$.
We illustrate this with a few examples.

\begin{example}
Let $w\in W(B_7)$ have characteristic polynomial $(t^3+1)(t^3+1)(t^+1)$. The
spin labeled Carter diagram we use is
$$\xy
(2,2.5)*{7}; (11,2.5)*{1}; (20,2.5)*{2}; (29,2.5)*{7}; (38,2.5)*{4};
(47,2.5)*{5}; (56,2.5)*{7};
{\ar@2{<-}(2,0)*{\circ}; (11,0)*{\circ}}; (11,0)*{\circ}; (20,0)*{\circ}
**\dir{-}; {\ar@2{<-}(29,0)*{\circ}; (38,0)*{\circ}}; (38,0)*{\circ};
(47,0)*{\circ} **\dir{-}; (56,0)*{\circ};
\endxy$$
All three components are relevant since they all have content 2. Since
$n_1=n_2=3$ and $n_3=1$, we have $f=1$. Also, since $d=6$ and $r=3$, we have
$e=f+1=2$. Thus $w$ has spin signature $h_7^2=1$, and so has universal spin 1.
In the language of algebraic groups, any representative of $w$ in $\SO_{15}$ has
order 6, and even in $\Spin_{15}$, any representative has order 6.
\end{example}

\begin{example}
Let $w\in W(B_7)$ have characteristic polynomial $(t^6+1)(t+1)$.  The spin
labeled Carter diagram we use is
$$\xy
(2,2.5)*{7}; (11,2.5)*{1}; (20,2.5)*{2}; (29,2.5)*{3}; (38,2.5)*{4};
(47,2.5)*{5}; (56,2.5)*{7};
{\ar@2{<-}(2,0)*{\circ}; (11,0)*{\circ}}; (11,0)*{\circ}; (20,0)*{\circ}
**\dir{-}; (29,0)*{\circ} **\dir{-}; (38,0)*{\circ} **\dir{-}; (47,0)*{\circ}
**\dir{-}; (56,0)*{\circ};
\endxy$$
Only the first component is relevant, and $n_1=6$, so $f=1$. Also, since $d=12$
and $r=2$ we have $e=f=1$. Thus $w$ has spin signature $h_7$, and so has
universal spin $-1$. In particular any representative of $w$ in $\SO_{15}$ has
order 12 but any representative in $\Spin_{15}$ actually has order 24.
\end{example}

\begin{remark}\label{spin_name}
The name ``spin" is slightly justified now. Indeed, it in some sense measures
the tendency of representatives in $\SO_m$ of elliptic $w$ to pick up an extra
``twist" when lifting to $\Spin_m$, that is, the order doubles. As we have seen
not every $w$ has this property, but we can tell which ones do just based on
their characteristic polynomials, so this really is an inherent property of $w$.
\end{remark}

\subsection{The $C_n$ case}\label{sec:C}
The universal case is covered in \cite{fedotov09}, with the conclusion that all
elliptic elements have universal spin $-1$. While we could realize $G_u$ and
$G_a$ explicitly as $\Sp_{2n}$ and $\PSp_{2n}$, we find that to cover the
general case it is convenient to just deal directly with Carter diagrams. The
result we find is the following

\begin{theorem}\label{type_C_complete}
Let $w\in W(C_n)$ be elliptic. Then $w$ has universal spin $-1$, and has adjoint
spin 1 if and only if $w^r=-I\in W$ for some $r$.
\end{theorem}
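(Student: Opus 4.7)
The plan is to compute the spin signature $g_0^d\in T_0$ explicitly as a product of simple-root generators $h_l$, and then to decide when that element is trivial or equals the non-trivial central element $z$.

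By \cite{carter72}*{Proposition~24} the Carter diagram of $w$ has the form $\G=C_{n_1}\times\cdots\times C_{n_r}$ with $\sum n_i=n$. Using conjugation-invariance of spin I would choose the standard signed-permutation representative in which each component $\G_i$ is supported on a disjoint coordinate block $I_i=\{i_1<\cdots<i_{n_i}\}$ of the orthonormal basis $\be_1,\dots,\be_n$ underlying $W(C_n)$. Two roots with supports in disjoint $I_i,I_j$ are then orthogonal with trivial $\al$-chain, since their sum or difference involves too many coordinates to lie in $\Phi(C_n)$. Lemma~\ref{orth_m_commute} therefore yields $[g_i,g_j]=1$ for $i\neq j$, so $g_0^d=\prod_i g_i^d$.

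For each relevant $\G_i$ the integer $d/d_i$ is odd, so $g_i^d=g_i^{d_i}$; irrelevant components contribute $1$. Each $w_i$ is a Coxeter element of the sub-system $W(C_{n_i})$ and is linked inside that sub-system to the corresponding $-I_i$, so the local Chevalley computation of Section~\ref{sec:CCox} (which depends only on the sub-root-system, and is valid in the ambient group because $N_0$ and $T_0$ of the sub-system embed into the ambient ones) gives $g_i^{d_i}=\prod_{j=1}^{n_i}h_{2\be_{i_j}}(-1)$. Expanding each factor via $2\be_l=2\al_l+2\al_{l+1}+\cdots+2\al_{n-1}+\al_n$ and Lemma~\ref{h_into_simples} yields $h_{2\be_l}(-1)=h_l h_{l+1}\cdots h_n$, and setting $S:=\bigsqcup_{i\text{ relevant}}I_i$ we obtain
\begin{align*}
g_0^d=\prod_{l\in S}\bigl(h_l h_{l+1}\cdots h_n\bigr)=\prod_{j=1}^n h_j^{c_j},\qquad c_j\equiv\bigl|\{l\in S:l\leq j\}\bigr|\pmod 2.
\end{align*}

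Since $c_{\min S}=1$ and $T_0\cong(\Z/2)^n$ is freely generated by $h_1,\dots,h_n$ inside $G_u$, we have $g_0^d\neq 1$, giving universal spin $-1$. For the adjoint statement, Table~\ref{tab:center} says $Z(G_u)=\{1,z\}$ with $z=h_1h_3\cdots h_k$, so $g_0^d$ is central iff the parity sequence $(c_j)$ matches $1,0,1,0,\dots$ up to position $k$ and vanishes beyond. Viewing $(c_j)$ as a parity step function flipping at each element of $S$, achieving this pattern forces $S=\{1,\dots,n\}$, i.e.\ every component is relevant, i.e.\ all the $v_2(n_i)$ agree. Finally, from the characteristic polynomial $\prod_i(t^{n_i}+1)$ I would verify that $w^r=-I$ has a solution iff all $v_2(n_i)$ agree, closing the equivalence. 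The main obstacle is the parity step-function bookkeeping in the penultimate step; commutation of the $g_i$'s, the per-component spin signature, and the eigenvalue characterization of linking to $-I$ are all routine given the preliminaries.
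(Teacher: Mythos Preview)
Your argument is correct and shares the paper's overall skeleton---decompose $g_0^d$ as a product over relevant components, compute each factor via Section~\ref{sec:CCox}, and test the result against $Z(G_u)$---but the realization of the Carter diagram and the final combinatorics genuinely differ.

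The paper realizes $\G=C_{n_1}\times\cdots\times C_{n_r}$ by iteratively removing nodes from extended Dynkin diagrams, so that every short root labeling a node of $\G$ is one of the ambient simple roots $\al_1,\dots,\al_{n-1}$. It then splits into two cases. If $w$ is linked to $-I$ the result is immediate from Section~\ref{sec:CCox}. If not, the paper observes that each relevant $n_i$ must be even, so each $g_i^d$ is a product of $h_j$'s with $j$ ranging over alternate simple-root indices appearing in $\G_i$, and finishes by arguing that some odd index $j\in\{1,3,\dots,k\}$ is necessarily absent from every relevant component (with a small case analysis when an irrelevant component has $n_i=2$), whence $g_0^d\neq z$.

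You instead conjugate to a block-diagonal representative on disjoint coordinate sets $I_i$, which makes the commutation $[g_i,g_j]=1$ and the identification $g_i^{d_i}=\prod_{l\in I_i}h_{2\be_l}(-1)$ transparent, and lets you treat the linked and unlinked cases uniformly. Your parity step-function formula $c_j\equiv|\{l\in S:l\le j\}|\pmod 2$ then reduces the adjoint question to the single clean equivalence $g_0^d=z\iff S=\{1,\dots,n\}$, with no case split and no appeal to the parity of the relevant $n_i$. The paper's route has the virtue of staying inside the spin-labeling machinery it develops for the other types; yours trades that consistency for a shorter, more self-contained endgame.
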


As in the $B_n$ case, the Carter diagrams for elements of $W=W(C_n)$ all arise
by removing nodes from extended Dynkin diagrams. This time though, we will use
the negative highest roots instead of the negative highest short roots. Now each
$\G_i$ is $C_{n_i}$ for some $n_i$, and $n_1+\cdots+n_r=n$
\cite{carter72}*{Proposition~24}. (We identify $C_1$ with $A_1$.) Since we only
ever introduce long roots, every short root corresponding to a node of $\G$ must
actually be one of the simple roots $\al_1,\dots,\al_{n-1}$. We claim that if
two roots $\al,\be$ corresponding to nodes of $\G$ are orthogonal, then the
$\al$-chain through $\be$ is just $\be$. This is clear if either $\al$ or $\be$
is long. Also, if both roots are short, they are both simple, and orthogonal
simple roots satisfy this property. In any case, if $\al$ and $\be$ are
orthogonal then by Lemma~\ref{orth_m_commute}, $[m_{\al}(1),m_{\be}(1)]=1$. Let
$g_i$ be the usual representative of $w_i$ and let $g_0=g_1\cdots g_r$ represent
$w$, so $g_0^d=g_1^d\cdots g_r^d$.

\begin{proof}[Proof of Theorem~\ref{type_C_complete}]
First note that the characteristic polynomial of $w_i$ is $t^{n_i}+1$, and so
$w$ is linked to $-I$ if and only if every $\G_i$ is relevant. If $w$ is linked
to $-I$ then by Section~\ref{sec:CCox}, $w$ has spin signature $h_1h_3\cdots
h_k$, so $w$ has adjoint spin 1 and universal spin $-1$. Now suppose $w$ is not
linked to $-I$. We know that $g_0^d$ is the product of the $g_i^d$ ranging over
all $i$ such that $\G_i$ is relevant. Also, for each relevant $\G_i$, $n_i$ must
be even since otherwise all $w_j$ would have order congruent to 2 mod 4,
implying that all $\G_i$ are relevant and $w$ in fact is linked to $-I$. By
Section~\ref{sec:CCox}, $g_i^d$ is a product of $h_{\al}(-1)$ where $\al$ ranges
over every other root of $\G_i$, beginning with the terminal short root. Also,
since $n_i$ is even for relevant $\G_i$, all such $\al$ are short roots and thus
simple roots. This tells us that $g_0^d$ is a product of $h_{\al}(-1)$ as $\al$
ranges over every simple root contained in a relevant $\G_i$. Such an $i$
exists, and so immediately we see that $g_0^d\neq1$, and $w$ has universal spin
-1. By Table~\ref{tab:center}, it now suffices to show that for some
$j=1,3,\dots,k$, the simple root $\al_j$ is not a node in any relevant $\G_i$.

Indeed, since $w$ is not linked to $-I$ we know there exists \emph{some}
irrelevant $\G_i$. The only way $\G_i$ can avoid containing a node $\al_j$ for
odd $j$ is if $n_i=2$ and the two nodes of $\G_i$ are a long root and some
$\al_{\ell}$ for even $\ell$. But then one of $\al_{\ell+1}$ or $\al_{\ell-1}$
must have been removed from the graph, or else $\G_i$ would not be a connected
component of $\G$. We conclude that $g_0^d$ cannot equal $h_1h_3\cdots h_k$, and
so $w$ has adjoint spin $-1$.
\end{proof}

\begin{remark}\label{signatures_in_C}
The last paragraph of the proof does not explicitly calculate the spin signature
$g_0^d$, and indeed since the spin signatures are non-central, conjugate Weyl
group elements may have different spin signatures.
\end{remark}

\begin{example}
In $C_6$, consider the conjugacy class with Carter diagram $C_2\times C_4$. Any
corresponding element $w$ has order 8. One spin labeling of the Carter diagram
is
$$\xy
(-4,2.5)*{(1,\dots,6)}; (9,2.5)*{1}; (20,2.5)*{3}; (29,2.5)*{4}; (38,2.5)*{5};
(47,2.5)*{6};
{\ar@2{->}(-4,0)*{\circ}; (9,0)*{\circ}}; (20,0)*{\circ}; (29,0)*{\circ}
**\dir{-}; (38,0)*{\circ} **\dir{-}; {\ar@2{<-}(38,0)*{\circ}; (47,0)*{\circ}};
\endxy$$
and only the $C_4$ component is relevant, so $g_0^8=h_3h_5$, which is not
central. Thus $w$ has spin $-1$, even in the adjoint case.
\end{example}

\begin{example}
In $C_8$, consider an element $w$ with Carter diagram $C_2\times C_6$ with spin
labeling
$$\xy
(-4,2.5)*{(1,\dots,8)}; (9,2.5)*{1}; (20,2.5)*{3}; (29,2.5)*{4}; (38,2.5)*{5};
(47,2.5)*{6}; (56,2.5)*{7}; (65,2.5)*{8};
{\ar@2{->}(-4,0)*{\circ}; (9,0)*{\circ}}; (20,0)*{\circ}; (29,0)*{\circ}
**\dir{-}; (38,0)*{\circ} **\dir{-}; (47,0)*{\circ} **\dir{-}; (56,0)*{\circ}
**\dir{-}; {\ar@2{<-}(56,0)*{\circ}; (65,0)*{\circ}};
\endxy$$
and order 12.
Now both components are relevant, so $g_0^{12}=h_1h_3h_5h_7$, which is a
nontrivial element of $Z(G_u)$. Thus $w$ has adjoint spin 1 and universal spin
-1.
\end{example}

It turns out the $C_n$ case provides the only source of elliptic Weyl group
elements with adjoint spin $-1$, except for one conjugacy class in $F_4$. The
$C_n$ case is also the only case where \emph{every} elliptic element has
universal spin $-1$.

\subsection{The $D_n$ case}\label{sec:D}
Certain cases are essentially done in \cite{fedotov09}, though there are no
results there for the universal case. Unfortunately, type $D_n$ is the only
classical type in which not every elliptic Carter diagram arises by removing
nodes from extended Dynkin diagrams, and applying our present approach to
diagrams with cycles would be very difficult. However, having completely handled
the $B_n$ case we can now just use the natural embeddings $W(D_n)\leq W(B_n)$
and $G_u(D_n)\leq G_u(B_n)$ to figure out the spin of any $w\in W(D_n)$. Indeed,
if $w\in W(D_n)$ is elliptic then it is also an elliptic element in $W(B_n)$, so
we know its spin and spin signature in $G_u(B_n)$ just from its characteristic
polynomial. Then since all the spin signatures are central they are independent
of the choice of representative $g_0$, and we can choose a representative in
$G_u(D_n)$, which tells us the spin and spin signature in $G_u(D_n)$, though we
have to use Lemma~\ref{h_into_simples} to express the spin signature in the
correct notation. As an example we show the case of Coxeter elements in
$W(D_n)$.

\begin{example}\label{DCox_example}
Let $w\in W=W(D_n)$ be a Coxeter element. Then $w$ has characteristic polynomial
$(t^{n-1}+1)(t+1)$, and so as an element of $W'=W(B_n)$, $w$ has spin labeled
Carter diagram
$$\xy
(2,2.5)*{n}; (11,2.5)*{1}; (20,2.5)*{2}; (38,2.5)*{n-1}; (47,2.5)*{n};
{\ar@2{<-}(2,0)*{\circ}; (11,0)*{\circ}}; (11,0)*{\circ}; (20,0)*{\circ}
**\dir{-}; (29,0)*{~\cdots~} **\dir{-}; (38,0)*{\circ} **\dir{-};
(47,0)*{\circ};
\endxy$$
So as not to confuse central elements of $G_u(B_n)$ and $G_u(D_n)$ we will use
$\widetilde{h}_n$ for the central element of $G_u(B_n)$. Direct calculation
shows that $e$ equals 0, 1, 2, or 3, if $n$ is congruent modulo 4 to 1, 3, 0, or
2, respectively. Thus the spin signature of $w$ in $G_u(B_n)$ is either 1 or
$\widetilde{h}_n$, if $n\equiv_40,1$ or $n\equiv_42,3$, respectively.

Now to figure out the spin signature of $w$ in $G_u(D_n)$ we need to calculate
$\widetilde{h}_n$ in terms of the $h_i$. We know that $\Phi(D_n)$ is the subroot
system of $\Phi(B_n)$ consisting of the long roots, with fundamental roots
$\al_1,\al_2,\dots,\al_{n-1},-\widetilde{\al}$. The root $\al_n$ equals
$(-\widetilde{\al}-\al_1-2(\al_2+\cdots+\al_{n-1}))/2$, and so by
Lemma~\ref{h_into_simples},
$\widetilde{h}_n=\widetilde{h}_{-\widetilde{\al}}(-1)\widetilde{h}_1$.
Converting to the standard numbering of fundamental roots in $\Phi(D_n)$, this
equals $h_{n-1}h_n$, one of the central elements in $G_u(D_n)$.
\end{example}

\begin{corollary}\label{DCox}
Coxeter elements in $W(D_n)$ have adjoint spin 1, and have universal spin 1 or
-1, if $n\equiv_40,1$ or $n\equiv_42,3$, respectively. Moreover, if the
universal spin is -1 then the spin signature is $h_{n-1}h_n$.\qed
\end{corollary}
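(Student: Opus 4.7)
The plan is that this corollary is essentially an immediate consequence of Example~\ref{DCox_example}, combined with the machinery already developed for $W(B_n)$. First I would fix a Coxeter element $w\in W(D_n)$ and observe, using the classification in \cite{carter72}*{Proposition~24} and the characteristic polynomial $(t^{n-1}+1)(t+1)$, that viewed inside $W(B_n)$ the element $w$ is elliptic with Carter diagram of type $B_{n-1}\times B_1$. Since the embedding $G_u(D_n)\hookrightarrow G_u(B_n)$ is compatible with the inclusion $W(D_n)\leq W(B_n)$, and since by Theorem~\ref{type_B_complete} the spin signature of $w$ in $G_u(B_n)$ is central and hence independent of the chosen representative, I can compute the $B_n$-spin signature and then translate it back to the $D_n$ setting.

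Next I would apply Corollary~\ref{B_complete_char_poly} with $r=2$, $d=2(n-1)$, $n_1=n-1$, $n_2=1$. Computing $f$ (the number of relevant components with $n_i\equiv_4 1,2$) and then $e$ reduces to a case check on $n$ modulo $4$; this is exactly the calculation already recorded in Example~\ref{DCox_example}, and yields $e=0,1,2,3$ according to $n\equiv_4 1,3,0,2$. Consequently the spin signature of $w$ in $G_u(B_n)$ is trivial when $n\equiv_4 0,1$ and equals the nontrivial central element $\widetilde{h}_n$ when $n\equiv_4 2,3$.

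The remaining step is to rewrite $\widetilde{h}_n$ in the $D_n$ numbering. Using the standard identification of $\Phi(D_n)$ with the long roots of $\Phi(B_n)$, and the fact that $\{\al_1,\dots,\al_{n-1},-\widetilde{\al}\}$ is a $D_n$-fundamental system, the short root $\al_n$ decomposes as $\al_n=(-\widetilde{\al}-\al_1-2(\al_2+\cdots+\al_{n-1}))/2$, so Lemma~\ref{h_into_simples} yields $\widetilde{h}_n=h_{n-1}h_n$ in $G_u(D_n)$. By Table~\ref{tab:center} this is one of the central elements of $Z(G_u(D_n))$, which proves the adjoint spin is always $1$ and pins down the universal spin signature as $h_{n-1}h_n$ exactly when $n\equiv_4 2,3$.

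The only real obstacle is bookkeeping: keeping straight the two fundamental systems and correctly applying Lemma~\ref{h_into_simples} to identify $\widetilde{h}_n$ with $h_{n-1}h_n$. Everything else is a direct citation of Theorem~\ref{type_B_complete} and Example~\ref{DCox_example}, so the proof can be written in a single short paragraph essentially quoting ``this is immediate from Example~\ref{DCox_example}."
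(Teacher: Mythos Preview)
Your proposal is correct and matches the paper's approach exactly: the corollary in the paper carries a \qed and is stated as immediate from Example~\ref{DCox_example}, and your outline simply unpacks that example's computation via Corollary~\ref{B_complete_char_poly} and the translation $\widetilde{h}_n=h_{n-1}h_n$ through Lemma~\ref{h_into_simples}.
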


\begin{remark}
Note that if $n$ is even, $G_u$ has two central elements of order 2 other than
$h_{n-1}h_n$, but they will never appear as spin signatures of elliptic
elements.
\end{remark}

\subsection{The $G_2$ case}\label{sec:G2}
The $G_2$ case was completely dealt with in \cite{fedotov09} using a different
method, but we will present it for completeness. The Weyl group $W=W(G_2)$ is
just the dihedral group of order 12. The Coxeter element $w$ is the rotation of
order 6, and a complete list of elliptic elements is given by
$w,w^2,w^3,w^4,w^5$; in particular they are all linked. Since $w^2$ has order 3
it has spin 1 by Corollary~\ref{odd_order}, and so by Lemma~\ref{powers_spin}
all elliptic elements have spin 1.

\subsection{The $F_4$ case}\label{sec:F4}
The $F_4$ case is partially covered in \cite{fedotov09}, in particular it is
shown that any elliptic power of a Coxeter element has spin 1. Here we show that
one elliptic conjugacy class has spin $-1$ and all others have spin 1. First
note that $-I\in W=W(F_4)$ and $G$ is simple, so by Corollary~\ref{simple_-I}
any elliptic $w$ linked to $-I$ will have spin 1. By Carter's classification in
\cite{carter72}, there are 9 elliptic conjugacy classes in $W$, and inspecting
Tables~3 and 8 in \cite{carter72} it is clear that 7 of these are linked to
$-I$. The two remaining classes have Carter diagram $A_2\times\widetilde{A}_2$
and $A_3\times\widetilde{A}_1$, where a tilde indicates the roots labeling the
nodes are short. Elements corresponding to the first diagram have odd order and
thus spin 1 by Corollary~\ref{odd_order}. This leaves the single class with
diagram $A_3\times\widetilde{A}_1$ having unknown spin. Let $w$ be an element of
this class, so $w$ has order 4 and spin labeled diagram
$$\xy
(2,2.5)*{(2,4)}; (11,2.5)*{1}; (20,2.5)*{2}; (29,2.5)*{4};
(2,0)*{\circ}; (11,0)*{\circ} **\dir{-}; (20,0)*{\circ} **\dir{-};
(29,0)*{\circ};
\endxy$$
Inspecting the root system for $F_4$, it is clear that the $\al_4$-chain of
roots through any of $\al_1,\al_2,-\widetilde{\al}$ consists of a single root.
The same is true of $\al_2$ and $-\widetilde{\al}$, and so the conclusion of
Lemma~\ref{orth_m_commute} holds. Since the $\widetilde{A}_1$ component is
irrelevant, this implies that $g_0^4=(h_2h_4)h_2=h_4$. We conclude that $w$
actually has spin $-1$ in this case. Also note that different spin labelings may
yield different (through still non-trivial) spin signatures.

\subsection{The $E_6$ case}\label{sec:E6}
This is the first case for which no results were found in \cite{fedotov09}.
However, we can now completely handle this case, with the result that all
elliptic elements have spin 1. By Carter's classification in \cite{carter72},
there are 5 elliptic conjugacy classes in $W=W(E_6)$, and inspecting Tables~3
and 9 in \cite{carter72} it is clear that 4 of these are linked to the class
with Carter diagram $A_2^3$. Elements of this class have order 3, and so have
spin 1 by Corollary~\ref{odd_order}. This leaves only the class with diagram
$A_1\times A_5$ having unknown spin. Let $w$ be a representative of this class,
so $w$ has order 6. A spin labeling of the Carter diagram is
$$\xy
(0,2.5)*{1}; (10,2.5)*{(1,3,6)}; (20,2.5)*{4}; (29,2.5)*{3}; (38,2.5)*{5};
(47,2.5)*{6};
(0,0)*{\circ}; (10,0)*{\circ}; (20,0)*{\circ} **\dir{-}; (29,0)*{\circ}
**\dir{-}; (38,0)*{\circ} **\dir{-}; (47,0)*{\circ} **\dir{-};
\endxy$$
Since $E_6$ is simply laced and both components are relevant,
$$g_0^6=h_1h_1h_3h_6h_3h_6=1.$$
Thus all elliptic elements in $W(E_6)$ have spin 1, in both the adjoint and
universal case.

\subsection{The $E_7$ case}\label{sec:E7}
Like $E_6$, no results were found in \cite{fedotov09} for the $E_7$ case. Since
$|Z(G_u)|=2$, $G$ must be either $G_a$ or $G_u$. In Table~10 of \cite{carter72}
the elliptic conjugacy classes in $W=W(E_7)$ are classified, and in Table~3 in
\cite{carter72} the corresponding characteristic polynomials are given, so we
can tell which elliptic elements are linked to each other. If $w_1,\dots,w_{12}$
denote choices of representatives of each elliptic conjugacy class, in the order
given in \cite{carter72}, then $w_1=-I$ is linked to $w_i$ for
$i=5,6,7,8,9,11,12$, and $w_2$ is linked to $w_{10}$. By Section~\ref{sec:E7Cox}
then, $w_i$ has universal spin $-1$ and adjoint spin 1 for
$i=1,5,6,7,8,9,11,12$. We now determine the spin of $w_i$ for $i=2,3,4,10$.

First consider $w_2$, which has order 4, Carter diagram $A_3^2\times A_1$, and
spin labeling
$$\xy
(2,2.5)*{1}; (11,2.5)*{2}; (20,2.5)*{3}; (29,2.5)*{5}; (38,2.5)*{6};
(47,2.5)*{7}; (57,2.5)*{(1,3,6)};
(2,0)*{\circ}; (11,0)*{\circ} **\dir{-}; (20,0)*{\circ} **\dir{-};
(29,0)*{\circ}; (38,0)*{\circ}; (47,0)*{\circ} **\dir{-}; (57,0)*{\circ}
**\dir{-};
\endxy$$
The $A_1$ component is irrelevant, so $g_0^4$ equals $h_1h_3h_6(h_1h_3h_6)$,
which is 1. Thus $w_2$ (and consequently $w_{10}$) has spin 1.

Next consider $w_3$, with order 6, Carter diagram $A_5\times A_2$, and spin
labeling
$$\xy
(2,2.5)*{1}; (11,2.5)*{2}; (20,2.5)*{3}; (29,2.5)*{4}; (38,2.5)*{5};
(47,2.5)*{7}; (57,2.5)*{(1,3,6)};
(2,0)*{\circ}; (11,0)*{\circ} **\dir{-}; (20,0)*{\circ} **\dir{-};
(29,0)*{\circ} **\dir{-}; (38,0)*{\circ} **\dir{-}; (47,0)*{\circ};
(57,0)*{\circ} **\dir{-};
\endxy$$
The $A_2$ component is irrelevant, so $g_0^6=h_1h_3h_5$. But this is precisely
the non-trivial element of $Z(G_u)$, by Table~\ref{tab:center}. So $w_3$ has
universal spin $-1$ and adjoint spin 1.

Lastly consider $w_4$, with order 8, Carter diagram $A_7$, and spin labeling
$$\xy
(2,2.5)*{1}; (11,2.5)*{2}; (20,2.5)*{3}; (29,2.5)*{4}; (38,2.5)*{6};
(47,2.5)*{7}; (57,2.5)*{(1,3,6)};
(2,0)*{\circ}; (11,0)*{\circ} **\dir{-}; (20,0)*{\circ} **\dir{-};
(29,0)*{\circ} **\dir{-}; (38,0)*{\circ} **\dir{-}; (47,0)*{\circ} **\dir{-};
(57,0)*{\circ} **\dir{-};
\endxy$$
Then $g_0^8=h_1h_3h_6(h_1h_3h_6)=1$, so $w_4$ has spin 1.

In conclusion, $w_2$, $w_4$ and $w_{10}$ always have spin 1, and all other
elliptic elements have adjoint spin 1 and universal spin $-1$.

\subsection{The $E_8$ case}\label{sec:E8}
As with the $F_4$ case, in \cite{fedotov09} it is indicated that powers of
Coxeter elements have spin 1. Here we show that all elliptic elements have spin
1. Note that $-I\in W=W(E_8)$ and $G$ is simple, so by Corollary~\ref{simple_-I}
any elliptic $w$ linked to $-I$ will have spin 1. By Carter's classification in
\cite{carter72}, there are 30 elliptic conjugacy classes in $W$. Choose
representatives for each class, denoted $w_1,\dots,w_{30}$. Inspecting Table~3
in \cite{carter72} it is clear that $w_i$ is linked to $w_1$ for all $i$ except
for $i=2,3,4,5,6,7,11,17,18,20,29$. Since $w_1=-I$ the classes linked to $w_1$
all have spin 1. Moreover for $i=2,4,7$, $w_i$ has odd order so $w_i$ has spin
1, and for $i=17,18,29$, $w_i$ is linked to $w_2$ and so has spin 1. Also, the
$w_i$ for $i=3,11,20,29$ are all linked to $w_3$, and $w_{29}$ has spin 1 so all
these $w_i$ do too. This leaves $w_5$ and $w_6$ as the only remaining cases,
which we can handle using spin labelings.

First consider $w_5$, with Carter diagram $A_5\times A_1\times A_2$ and spin
labeling
$$\xy
(0,2.5)*{(2,4,6)}; (11,2.5)*{1}; (20,2.5)*{2}; (29,2.5)*{3}; (38,2.5)*{4};
(47,2.5)*{6}; (56,2.5)*{7}; (65,2.5)*{8};
(0,0)*{\circ}; (11,0)*{\circ} **\dir{-}; (20,0)*{\circ} **\dir{-};
(29,0)*{\circ} **\dir{-}; (38,0)*{\circ} **\dir{-}; (47,0)*{\circ};
(56,0)*{\circ}; (65,0)*{\circ} **\dir{-};
\endxy$$
The $A_2$ component is irrelevant, so $g_0^6=(h_2h_4h_6)h_2h_4h_6=1$ and $w_5$ has
spin 1.

Lastly consider $w_6$, with Carter diagram $A_7\times A_1$ and order 8 and spin
labeling
$$\xy
(0,2.5)*{(2,4,6)}; (11,2.5)*{1}; (20,2.5)*{2}; (29,2.5)*{3}; (38,2.5)*{4};
(47,2.5)*{5}; (56,2.5)*{6}; (65,2.5)*{8};
(0,0)*{\circ}; (11,0)*{\circ} **\dir{-}; (20,0)*{\circ} **\dir{-};
(29,0)*{\circ} **\dir{-}; (38,0)*{\circ} **\dir{-}; (47,0)*{\circ} **\dir{-};
(56,0)*{\circ} **\dir{-}; (65,0)*{\circ};
\endxy$$
The $A_1$ component is irrelevant, so $g_0^8=(h_2h_4h_6)h_2h_4h_6=1$ and $w_6$ has
spin 1.

It is remarkable that outside some cases in $C_n$ and one case in $F_4$, every
elliptic conjugacy class has adjoint spin 1. We also see that universal spin
$-1$ occurs all the time in $C_n$, half the time in $A_{n-1}$, never in $E_6$,
most of the time in $E_7$, and quite often in $B_n$ and $D_n$. It seems possible
that these results could be proved without appealing the classification at all,
but at present there is no general method that can handle every case. The
following table summarizes our results:

\begin{table}\label{tab:final_chart}
\caption{Spins of elliptic elements}
\begin{tabular}{|c|c|c|c|c|}
	\hline
	$\Phi$ & $\G$ & \textnormal{adjoint spin} & \textnormal{universal spin} \\
	\hline
	$A_{n-1}$ & $A_{n-1}$ & 1 & $(-1)^{n-1}$ \\
	\hline
	$B_n$ & \textnormal{any} & 1 & see Section~\ref{sec:B} \\
	\hline
	$C_n$ & \textnormal{linked to }$A_1^n$ & 1 & -1\\
	$C_n$ & \textnormal{all others} & -1 & -1 \\
	\hline
	$D_n$ & \textnormal{any} & 1 & see Section~\ref{sec:D} \\
	\hline
	$G_2$ & \textnormal{any} & 1 & 1 \\
	\hline
	$F_4$ & $A_3\times\widetilde{A}_1$ & -1 & -1 \\
	$F_4$ & \textnormal{all others} & 1 & 1 \\
	\hline
	$E_6$ & \textnormal{any} & 1 & 1 \\
	\hline
	$E_7$ & $A_1\times A_3^2$ & 1 & 1 \\
	$E_7$ & $A_7$ & 1 & 1 \\
	$E_7$ & $E_7(a_2)$ & 1 & 1 \\
	$E_7$ & \textnormal{all others} & 1 & -1 \\
	\hline
	$E_8$ & \textnormal{any} & 1 & 1 \\
	\hline
\end{tabular}
\end{table}

\renewcommand{\baselinestretch}{1.2}

\begin{bibdiv}
\begin{biblist}

\bib{abr11}{article}{
	author={Abramenko, Peter},
	author={Zaremsky, Matthew C. B.},
	title={Strongly and Weyl transitive group actions on buildings arising
from Chevalley groups},
	eprint={http://arxiv.org/abs/1101.1113},
}

\bib{carter72}{article}{
	author={Carter, Roger},
	title={Conjugacy classes in the Weyl group},
	journal={Compositio Mathematica},
	volume={25},
	date={1972},
	pages={1-59},
}

\bib{carter72book}{book}{
  author={Carter, Roger},
  title={Simple Groups of Lie Type},
  series={Pure and Applied Mathematics},
  volume={XXVIII},
  publisher={John Wiley and Sons},
  address={London},
  date={1972},
  isbn={978-0-471-50683-6},
}

\bib{carter05}{book}{
  author={Carter, Roger},
  title={Lie Algebras of Finite and Affine Type},
  series={Cambridge Studies in Advanced Mathematics},
  publisher={Cambridge University Press},
  address={New York},
  date={2005},
  isbn={978-0-521-85138-1},
}

\bib{dwyer99}{article}{
	author={Dwyer, W. G.},
	author={Wilkerson, C. W.},
	title={Centers and Coxeter Elements},
	journal={Homotopy methods in algebraic topology},
	address={Boulder},
	volume={271},
	series={Contemporary Mathematics},
	pages={53-75},
	organization={Amer. Math. Soc.},
	date={1999},
}

\bib{fedotov09}{article}{
	author = {Fedotov, Stanislav},
	title = {Affine algebraic groups with periodic components},
	journal = {Sbornik Mathematics},
	volume = {200},
	year = {2009},
	pages = {1089-1104},
}

\bib{gls98}{book}{
	author={Gorenstein, Daniel},
	author={Lyons, Richard},
	author={Solomon, Ronald},
	title={The Classification of the Finite Simple Groups, Number 3},
	date={1998},
	series={Mathematical Surveys and Monographs},
	volume={40},
}

\bib{humphreys92}{book}{
  author={Humphreys, James E.},
  title={Reflection Groups and Coxeter Groups},
  series={Cambridge Studies in Advanced Mathematics},
  publisher={Cambridge University Press},
  address={Cambridge},
  date={1992},
  isbn={978-0-521-43613-7},
}  

\bib{lusztig10}{article}{
	author={Lusztig, George},
	title={Elliptic elements in a Weyl group: a homogeneity property},
	date={2010},
	eprint={http://arxiv.org/abs/1007.5040},
}

\bib{steinberg67}{book}{
  author={Steinberg, Robert},
  title={Lectures on Chevalley Groups},
  publisher={Yale University Press},
  date={1967},
}  

\end{biblist}
\end{bibdiv}

\end{document}